\title{Ordering connected graphs by their Kirchhoff indices \thanks{
 The first author is supported by NNSF of China (No. 11201227), China Postdoctoral Science Foundation (2013M530253) and Natural Science Foundation of Jiangsu Province (BK20131357), the second author was supported by National Research Foundation funded by the Korean government with the grant No. 2013R1A1A2009341, the third author is supported by NNSF of China (No. 11271256). \newline Email addresses: kexxu1221@126.com(K. Xu), kinkardas2003@gmail.com(K. C. Das), \newline xiaodong@sjtu.edu.cn(X.D. Zhang).}
 }
 \author{Kexiang Xu$\/^{a},$ Kinkar Ch. Das$\/^{b},$ Xiao-Dong Zhang$\/^{c}$\\\\
 $^{a}$ \small  College of Science, Nanjing University of
 Aeronautics \& Astronautics,\\
 \small Nanjing, Jiangsu 210016, P.R. China\\
 $^{b}$ \small Department of Mathematics, Sungkyunkwan University,\\
 \small Suwon 440-746, Republic of Korea\\
 $^{c}$ \small Department of Mathematics, Shanghai Jiao Tong University,\\
\small 800 Dongchuan road, Shanghai, 200240, P.R. China}
 \date{}
\begin{document}

\newtheorem{corollary}{Corollary}[section]
\newtheorem{theorem}{Theorem}[section]
\newtheorem{remark}{Remark}[section]
\newtheorem{lemma}{Lemma}[section]
\newtheorem{conjecture}{Conjecture}[section]
\newtheorem{problem}{Problem}[section]

\renewcommand{\proofname}{\textup{\textbf{Proof}}}

 \maketitle
\begin{center}
 (Received  Nov. 24, 2014)
 \end{center}
\baselineskip=0.23in

\begin{abstract}
 The Kirchhoff index
$Kf(G)$ of a graph $G$ is the sum of resistance distances between
all unordered pairs of vertices, which was introduced by Klein and Randi\'c. In this paper we characterized all extremal graphs with Kirchhoff index among all graphs obtained by deleting $p$ edges from a complete graph $K_n$ with $p\leq\lfloor\frac{n}{2}\rfloor$ and obtained a sharp upper bound on the Kirchhoff index of these graphs. In addition, all the graphs with the first to ninth maximal Kirchhoff indices are completely determined among all connected graphs of order $n>27$.
\end{abstract}

 \textit{Keywords:} Graph; Distance (in graph);
Kirchhoff index; Laplacian spectrum\\

\textit{AMS Subject Classifications {\em(2010)}: }~05C50, 05C12,
05C35.


 \baselineskip=0.30in

\section{Introduction}

\ \indent Let $G$ be a connected graph with vertices labeled as  $v_1,\,v_2,
\ldots,\,v_n$. The distance between vertices $v_i$ and $v_j$,
denoted by $d_G(v_i,\,v_j$), is the length of a shortest path
between them. The famous Wiener index $W(G)$ \cite{Wiener} is the
sum of distances between all unordered pairs of vertices, that is,
 $W(G) =\sum
_{i<j}d_G(v_i,\,v_j).$

In 1993, Klein and Randi\'c \cite{Klein93} introduced a new distance
function named resistance distance based on electrical network
theory. They viewed  $G$ as an electrical network $N$ by replacing
each edge of $G$ with a unit resistor, the resistance distance
between $v_i$ and $v_j$, denoted by $r_G(v_i,\,v_j)$, is defined to
be the effective resistance between them in $N$. Similar to the long
recognized shortest path distance, the resistance distance is also
intrinsic to the graph, not only with some nice purely mathematical
and physical interpretations \cite{Klein93, Klein97}, but with a
substantial potential for chemical applications.

In fact, the shortest-path might be imagined to be more relevant
when there is corpuscular communication (along edges) between two
vertices, whereas the resistance distance might be imagined to be
more relevant when the communication is wave- or fluid-like. Then
the chemical communication in molecules is rather wavelike suggests
the utility of this concept in chemistry. So in recent years, the
resistance distance was well studied in mathematical and chemical
literatures \cite{BKlein02,Bapat,Bonchev94,DA1,DA2,DXG2012,DChen2013,DChen2014}.

Analogue to Wiener index, the Kirchhoff index (or resistance index)
\cite{Bonchev94} is defined as
$$Kf(G) =\sum _{i<j}r_G(v_i,\,v_j).$$

As a useful structure-descriptor,  the computation of Kirchhoff
index is a hard problem \cite{BKlein02}, but one may compute the
specific classes of graphs. Since for trees, the
Kirchhoff index and the Wiener index coincide. It is possible to
study the Kirchhoff index of topological structures containing
cycles. Throughout this paper we denote by $P_n$ (resp. $C_n$, $K_n$) denote the path graph (resp. cycle graph, complete graph) on $n$
vertices. Some nice mathematical results can be found in \cite{Lukovits99,Yang2008}.

All graphs considered in this paper are finite and simple. For two
nonadjacent vertices $v_i$ and $v_j$, we use $G+e$ to denote the
graph obtained by inserting a new edge $e=v_i\,v_j$ in $G$.
Similarly, for $e\in E(G)$ of graph $G$, let $G-e$ be the subgraph
of $G$ obtained by deleting the edge $e$ from $E(G)$. The complement
of graph $G$ is always denoted by $\overline{G}$. For two vertex
disjoint graphs $G_1$ and $G_2$, we denote by $G_1\bigcup G_2$ the
graph which consists of two connected components $G_1$ and $G_2$.
The \textit{join} of $G_1$ and $G_2$, denoted by $G_1 \bigvee G_2$,
is the graph with vertex set $V (G_1)\bigcup V (G_2)$ and edge set
$E(G_1)\bigcup E(G_2)\bigcup \{u_iv_j : u_i\in V(G_1), v_j\in
V(G_2)\}$. For other undefined notation and terminology from graph
theory, the readers are referred to \cite{BM1976}.

For a graph $G$ with vertex set $V=\{v_1,\,v_2,\ldots,\,v_n\}$, we
denote by $d_i$ the degree of the vertex $v_i$ in $G$ for $i=1,\,2,
\ldots , n$. Assume that $A(G)$ is the $(0, 1)$-adjacency matrix of
$G$ and $D(G)$ is the diagonal matrix of vertex degrees. The
Laplacian matrix of $G$ is $L(G)=D(G)-A(G)$. The Laplacian
polynomial $Q(G, \lambda)$ of $G$ is the characteristic polynomial
of its Laplacian matrix, $Q(G,\lambda)=det(\lambda I_n-L(G))=
\sum\limits_{k=0}^{n}(-1)^kc_k\lambda^{n-k}$. The Laplacian matrix
$L(G)$ has nonnegative eigenvalues
$n\geq\mu_1\geq\mu_2\geq\cdots\geq\mu_n=0$ \cite{CMS1995}. Denote by
$S(G)=\{\mu_1,\,\mu_2,\ldots,\,\mu_n\}$ the spectrum of $L(G)$,
i.e., the Laplacian spectrum of $G$. If the eigenvalue $\mu_i$
appears $l_i>1$ times in $S(G)$, we write them as $\mu_i^{(l_i)}$
for the sake of convenience.

  In 1996, Gutman and Mohar \cite{Gut1996} obtained the following nice result, by which a relation is established between Kirchhoff index and Laplacian spectrum:
  \begin{equation}
 Kf(G)=n\sum\limits_{i=1}^{n-1}\frac{1}{\mu_i}~\label{e1}
 \end{equation}
for any connected graphs of order $n\geq 2$.

Let ${\cal{G}}(n)$ be the set of connected graphs of order $n$. In
this paper, we determined the first to ninth minimal Kirchhoff
indices of graphs from ${\cal{G}}(n)$ with $n>9$; also characterized
all the graphs from ${\cal{G}}(n)$ with $n>27$ with the first to
ninth maximal Kirchhoff indices.

\section{Preliminaries}
 \ \indent In this section we will list some known lemmas as necessary preliminaries.
 \begin{lemma}\label{NEW0} {\em(\cite{GMS1990})} Let $G$ be a graph and $G'=G+e$ the graph obtained by inserting a new edge into $G$. Then we have
$$\mu_1(G')\geq \mu_1(G)\geq \mu_2(G')\geq\mu_2(G)\geq\cdots\geq\mu_n(G')=\mu_n(G)=0.$$
\end{lemma}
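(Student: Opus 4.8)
The plan is to realize the passage from $G$ to $G'=G+e$, where $e=v_iv_j$, as a rank-one positive semidefinite perturbation of the Laplacian matrix and then to invoke the Courant--Fischer min--max principle. Writing $z\in\mathbb{R}^n$ for the vector with $+1$ in coordinate $i$, $-1$ in coordinate $j$, and $0$ elsewhere, the only changes in passing from $L(G)$ to $L(G')$ are $+1$ added to the diagonal entries $(i,i)$ and $(j,j)$ together with $-1$ added to the off-diagonal entries $(i,j)$ and $(j,i)$; hence
$$L(G')=L(G)+zz^{\top},$$
where $zz^{\top}$ is symmetric, positive semidefinite, and of rank one. For every $x\in\mathbb{R}^n$ this yields the pointwise identity $x^{\top}L(G')x=x^{\top}L(G)x+(z^{\top}x)^2$, which is the engine of the whole argument.

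For the lower inequalities $\mu_k(G')\ge\mu_k(G)$ I would use the max--min form
$$\mu_k(M)=\max_{\dim S=k}\ \min_{0\neq x\in S}\frac{x^{\top}Mx}{x^{\top}x}.$$
Since $(z^{\top}x)^2\ge 0$, the Rayleigh quotient of $L(G')$ dominates that of $L(G)$ at every vector, so evaluating both over the same competing $k$-dimensional subspaces $S$ gives $\mu_k(G')\ge\mu_k(G)$ for every $k$ at once.

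The upper inequalities $\mu_k(G)\ge\mu_{k+1}(G')$ form the crux, and here the rank-one nature of the perturbation is essential. Let $u_1,\dots,u_n$ be orthonormal eigenvectors of $L(G)$ associated with $\mu_1(G)\ge\cdots\ge\mu_n(G)$, and consider the two subspaces $W=\{x:z^{\top}x=0\}$ of dimension $n-1$ and $U_k=\mathrm{span}\{u_k,\dots,u_n\}$ of dimension $n-k+1$. A dimension count gives $\dim(W\cap U_k)\ge (n-1)+(n-k+1)-n=n-k$, so $W\cap U_k$ contains a subspace $T$ with $\dim T=n-k$. On $T\subseteq W$ the two quadratic forms coincide because $z^{\top}x=0$, while on $T\subseteq U_k$ the Rayleigh quotient of $L(G)$ is at most $\mu_k(G)$; hence $\max_{0\neq x\in T}x^{\top}L(G')x/(x^{\top}x)\le\mu_k(G)$. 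Feeding $T$ into the min--max form $\mu_{k+1}(G')=\min_{\dim U=n-k}\max_{0\neq x\in U}x^{\top}L(G')x/(x^{\top}x)$ then yields $\mu_{k+1}(G')\le\mu_k(G)$. Interleaving the two families of inequalities produces the full chain, and the terminal equality $\mu_n(G')=\mu_n(G)=0$ is automatic since $L(G)\mathbf{1}=L(G')\mathbf{1}=0$. I expect the subspace-intersection step---simultaneously exploiting that the forms agree on $\ker z^{\top}$ and that $L(G)$ is controlled on $U_k$---to be the only delicate point.
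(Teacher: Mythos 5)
Your argument is correct and complete: the identification $L(G')=L(G)+zz^{\top}$ with $z=e_i-e_j$, the max--min comparison for $\mu_k(G')\ge\mu_k(G)$, and the subspace-intersection step inside $\ker z^{\top}$ for $\mu_{k+1}(G')\le\mu_k(G)$ together give exactly the stated interlacing, and the dimension counts check out. The paper itself gives no proof of this lemma --- it is quoted from Grone, Merris and Sunder --- so there is nothing internal to compare against; your proof is the standard rank-one-perturbation argument (equivalently, Weyl's inequalities applied to the positive semidefinite rank-one summand $zz^{\top}$) that underlies the cited result.
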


Combining Lemma \ref{NEW0} and the fact that
$\sum\limits_{i=1}^{n-1}
\mu_i(G+e)-\sum\limits_{i=1}^{n-1}
\mu_i(G) = 2$, by the equation (\ref{e1}),
the following lemma can be easily obtained.
\begin{lemma}\label{NEW1} \em{(\cite{Lukovits99})} Let $G$ be a connected graph with $e\in E(G)$ and two nonadjacent
vertices $v_i$ and $v_j$ in $V(G)$. Then we have
\begin{itemize}

 \item [$(1)$]  $Kf(G-e)> Kf(G)$ where $G-e$ is connected;

 \item [$(2)$]  $Kf(G)>Kf(G+e')$ where $e'=v_iv_j$.
 \end{itemize}
\end{lemma}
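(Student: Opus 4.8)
The plan is to prove part $(2)$ directly from the spectral ingredients already assembled and then recover part $(1)$ by relabelling. For part $(2)$, write $G'=G+e'$ and observe that adjoining an edge to a connected graph leaves it connected, so the identity (\ref{e1}) is available for both $G$ and $G'$. The first step is to extract from Lemma \ref{NEW0} the termwise comparison $\mu_i(G')\geq\mu_i(G)$ for every $i=1,\dots,n-1$: in the interlacing chain $\mu_1(G')\geq\mu_1(G)\geq\mu_2(G')\geq\mu_2(G)\geq\cdots$ the value $\mu_i(G')$ always precedes $\mu_i(G)$, which is exactly this inequality.

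Since the map $x\mapsto 1/x$ is strictly decreasing on $(0,\infty)$ and both graphs are connected (so $\mu_{n-1}>0$ in each case, and every relevant eigenvalue is positive), each of these comparisons transfers to $1/\mu_i(G')\leq 1/\mu_i(G)$. Summing over $i=1,\dots,n-1$ and multiplying by $n$ through (\ref{e1}) yields $Kf(G')\leq Kf(G)$, so the only remaining task is to upgrade this to a strict inequality.

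Strictness is where the trace constraint enters. Because $\sum_{i=1}^{n-1}\mu_i(G')-\sum_{i=1}^{n-1}\mu_i(G)=2>0$ while every summand difference $\mu_i(G')-\mu_i(G)$ is nonnegative, at least one index $j$ must satisfy $\mu_j(G')>\mu_j(G)$ strictly. For that index the strict monotonicity of $1/x$ gives $1/\mu_j(G')<1/\mu_j(G)$, so the summed inequality is strict and $Kf(G+e')=Kf(G')<Kf(G)$, establishing $(2)$.

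Finally, part $(1)$ follows by applying $(2)$ with $G$ replaced by $H:=G-e$ and with the new edge taken to be the deleted edge $e$: the hypothesis that $G-e$ is connected makes (\ref{e1}) valid for $H$, and then $Kf(G-e)=Kf(H)>Kf(H+e)=Kf(G)$. I anticipate no genuine obstacle here, the single point demanding care being the justification of strictness, which rests entirely on the fact that the Laplacian spectral sum increases by exactly two rather than on any delicate estimate.
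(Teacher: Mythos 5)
Your proof is correct and follows exactly the route the paper indicates: it combines the interlacing of Lemma \ref{NEW0} (giving $\mu_i(G+e')\geq\mu_i(G)$ termwise) with the trace identity $\sum_{i=1}^{n-1}\mu_i(G+e')-\sum_{i=1}^{n-1}\mu_i(G)=2$ to force strictness at some index, then passes through equation (\ref{e1}); part $(1)$ by relabelling is also how the statement is meant to be read. No issues.
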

Based on Lemma \ref{NEW1} $(1)$, the corollary below follows immediately.
\begin{corollary} \label{CO0} Suppose that $G$ is a connected graph of order $n$ and with $m\geq n$ edges and with $T$ as its spanning tree. Then we have $Kf(G)<Kf(T)$.
\end{corollary}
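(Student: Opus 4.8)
The plan is to realize the spanning tree $T$ as the endpoint of a chain of edge-deletions starting from $G$, and then telescope the strict inequalities supplied by Lemma \ref{NEW1}$(1)$. The only genuine point to check is that each graph appearing along the way is connected, which is automatic because every one of them contains $T$ as a spanning subgraph.

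First I would count edges. Any spanning tree $T$ of an $n$-vertex connected graph has exactly $n-1$ edges, so the edge set $E(G)\setminus E(T)$ has cardinality $m-(n-1)$. Since by hypothesis $m\geq n$, this number satisfies $k:=m-(n-1)\geq 1$; in particular there is at least one edge of $G$ outside $T$, which is what forces the inequality to be strict rather than an equality.

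Next I would list the edges of $E(G)\setminus E(T)$ as $e_1,e_2,\ldots,e_k$ and build the nested sequence of graphs $G_0=G$ and $G_i=G_{i-1}-e_i$ for $1\leq i\leq k$, so that $G_k=T$. Each $G_i$ still contains all the edges of $T$, hence $G_i$ has $T$ as a spanning subgraph and is therefore connected. This means Lemma \ref{NEW1}$(1)$ applies at every step, with $G_i=G_{i-1}-e_i$ connected, giving $Kf(G_{i-1})<Kf(G_i)$ for each $i\in\{1,\ldots,k\}$.

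Finally I would chain these $k\geq 1$ strict inequalities to obtain
$$Kf(G)=Kf(G_0)<Kf(G_1)<\cdots<Kf(G_k)=Kf(T),$$
and in particular $Kf(G)<Kf(T)$, as claimed. I do not expect any real obstacle here: the statement is essentially an iterated application of Lemma \ref{NEW1}$(1)$, and the only thing one must be careful about is maintaining connectivity through the deletions, which the containment of $T$ in every $G_i$ guarantees for free.
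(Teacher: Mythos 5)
Your proof is correct and matches the paper's approach: the paper states the corollary "follows immediately" from Lemma \ref{NEW1}$(1)$, and your argument of deleting the $k=m-(n-1)\geq 1$ edges of $E(G)\setminus E(T)$ one at a time, with connectivity preserved because $T$ remains a spanning subgraph, is precisely the iterated application that justification intends.
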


\begin{lemma}\label{NEW2} {\em(\cite{Merris1994})} Let $G$ be a graph of order $n$ with $S(G)=\{\mu_1,\,\mu_2, \ldots ,\,\mu_{n-1}, 0\}$.
Then $S(\overline{G})=\{n-\mu_1,\, n-\mu_2,\ldots ,\,n-\mu_{n-1},
0\}$.
\end{lemma}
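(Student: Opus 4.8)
The plan is to exploit the identity that ties together the Laplacians of $G$ and $\overline{G}$ through the complete graph. First I would record the two elementary facts that for every vertex its degrees in $G$ and $\overline{G}$ sum to $n-1$, so $D(G)+D(\overline{G})=(n-1)I_n$, while the adjacency matrices satisfy $A(G)+A(\overline{G})=J_n-I_n$, where $J_n$ denotes the all-ones matrix. Subtracting these gives
$$L(G)+L(\overline{G})=(n-1)I_n-(J_n-I_n)=nI_n-J_n,$$
which is exactly the Laplacian of $K_n$. This single matrix identity is the whole engine of the proof.

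Next I would bring in the eigenstructure of $L(G)$. Since $L(G)$ is real symmetric with vanishing row sums, the all-ones vector $\mathbf{1}$ lies in its kernel, accounting for the eigenvalue $\mu_n=0$, and the remaining eigenvalues $\mu_1,\dots,\mu_{n-1}$ admit an orthonormal family of eigenvectors $x_1,\dots,x_{n-1}$, each orthogonal to $\mathbf{1}$. The key observation is that for any such $x_i$ we have $J_n x_i=\mathbf{1}\,(\mathbf{1}^\top x_i)=0$, precisely because $\mathbf{1}^\top x_i=0$; thus $J_n$ acts as the zero operator on the orthogonal complement of $\mathbf{1}$.

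I would then apply the matrix identity to these vectors. For each $i\in\{1,\dots,n-1\}$,
$$L(\overline{G})\,x_i=\bigl(nI_n-J_n-L(G)\bigr)\,x_i=n\,x_i-0-\mu_i\,x_i=(n-\mu_i)\,x_i,$$
so $x_i$ is an eigenvector of $L(\overline{G})$ with eigenvalue $n-\mu_i$. Together with $L(\overline{G})\mathbf{1}=0$, which supplies the eigenvalue $0$, the orthogonal system $\mathbf{1},x_1,\dots,x_{n-1}$ forms a full eigenbasis of $L(\overline{G})$, and reading off its eigenvalues yields $S(\overline{G})=\{n-\mu_1,\dots,n-\mu_{n-1},0\}$.

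As for obstacles, there is little of substance here: the argument is a direct computation once the identity $L(G)+L(\overline{G})=nI_n-J_n$ is in hand. The only subtlety worth flagging is the use of symmetry of $L(G)$ to guarantee that the eigenvectors for the nonzero eigenvalues can be chosen orthogonal to $\mathbf{1}$, since it is exactly that orthogonality which makes $J_n$ annihilate them and lets the shift by $n$ pass cleanly through.
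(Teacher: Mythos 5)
Your proof is correct and complete: the identity $L(G)+L(\overline{G})=nI_n-J_n$ together with the observation that $J_n$ annihilates the orthogonal complement of $\mathbf{1}$ is exactly the standard argument for this classical fact. The paper itself gives no proof, citing Merris's survey, and your argument is the one found there, so there is nothing to compare beyond noting that you have correctly supplied the omitted details.
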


\begin{lemma} \label{NEW4} {\em(\cite{Klein93})}
Let $G$ be a connected graph. Then we have $W(G) \geq Kf(G),$ with
equality if and only if $G$ is a tree.
\end{lemma}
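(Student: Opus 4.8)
The plan is to reduce the global inequality to a pairwise comparison between resistance distance and shortest-path distance, and then to analyze when equality can hold pairwise. The central claim is that for every pair of vertices $v_i,v_j$ in a connected graph $G$,
$$r_G(v_i,v_j)\leq d_G(v_i,v_j),$$
with equality precisely when there is a unique path joining $v_i$ and $v_j$ in $G$. Granting this, summing over all unordered pairs $i<j$ immediately yields $Kf(G)=\sum_{i<j}r_G(v_i,v_j)\leq\sum_{i<j}d_G(v_i,v_j)=W(G)$, and the equality analysis of the sum reduces to deciding when every summand is an equality.

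To establish the pairwise inequality I would invoke the electrical-network interpretation together with Rayleigh's monotonicity principle. Fix a shortest path $P$ between $v_i$ and $v_j$; as a series of $d_G(v_i,v_j)$ unit resistors it has effective resistance exactly $d_G(v_i,v_j)$. Since $P$ is a subnetwork of $G$, adjoining the remaining edges of $G$ can only provide additional parallel conducting routes, so by Rayleigh monotonicity the effective resistance cannot increase; hence $r_G(v_i,v_j)\leq d_G(v_i,v_j)$. This settles the upper bound $Kf(G)\leq W(G)$.

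For the equality characterization, first suppose $G$ is a tree. Then between any two vertices there is a unique path, which as a series circuit of unit resistors gives $r_G(v_i,v_j)=d_G(v_i,v_j)$ for every pair, so $Kf(G)=W(G)$. Conversely, suppose $G$ is not a tree, so $G$ contains a cycle $C$. Choose an edge $e=uv$ lying on $C$; then $d_G(u,v)=1$, while in $G$ the direct edge $e$ is in parallel with the complementary $u$--$v$ path around $C$ (of length at least $2$), so the effective resistance between $u$ and $v$ is strictly smaller than $1$. Thus $r_G(u,v)<d_G(u,v)$ for this pair, which forces $Kf(G)<W(G)$. This proves the ``only if'' direction and completes the argument.

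The step I expect to require the most care is the rigorous justification of the pairwise bound $r_G\leq d_G$: one must phrase Rayleigh monotonicity (or equivalently a shorting/cutting argument) so that it applies to a subnetwork obtained by deleting edges, and confirm that the effective resistance along a single path equals its length. Everything else is a direct consequence of the series-parallel behaviour of resistances and of the definitions of $W$ and $Kf$.
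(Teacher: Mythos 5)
Your argument is correct and is essentially the standard one from the cited source \cite{Klein93}: the paper itself states this lemma without proof, simply citing Klein and Randi\'c, whose proof likewise rests on the pairwise bound $r_G(v_i,v_j)\leq d_G(v_i,v_j)$ (via Rayleigh monotonicity / series--parallel reasoning) with equality exactly for uniquely connected pairs. Your handling of the equality case --- trees give equality termwise, while any cycle edge yields a strictly smaller resistance and hence a strict global inequality --- is sound, so nothing further is needed.
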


Before listing this problem, we first introduce some necessary
notations and definitions. A vertex $v$ of a tree $T$ is called a
\textit{branching point} if $d(v) \geq 3$. A tree is said to be
starlike if exactly one of its vertices has degree greater than two.
Let $P_n$ denote the path on $n$ vertices. By
$T_n(n_1,\,n_2,\ldots,\,n_k)$ we denote the starlike tree which has
a vertex $v$ of degree $k\geq 3$ and which has the property
$$T_n(n_1,\,n_2,\ldots,\,n_k)-v=P_{n_1}\cup P_{n_2} \cup \cdots\cup P_{n_k}.$$
This tree has $n_1+n_2+\ldots+n_k+1=n$ vertices and assumed that
$n_1\geq n_2\geq \ldots\geq n_k\geq 1.$ We say that the starlike
tree $T_n(n_1,\,n_2,\ldots,\,n_k)$ has $k$ branches, the lengths of
which are $n_1,\,n_2,\ldots,\,n_k$, respectively.

\vspace*{3mm}

Note that any tree with exactly  one branching point is a starlike
tree. Assume that $T$ is a tree of order $n$ with exactly two
branching points $v_1$ and $v_2$ with $d(v_1) = r$ and $d(v_2) = t$.
The orders of $r-1$ components, which are paths, of $T-v_1$ are
$p_1,\ldots,\, p_{r-1}$, the order of the component which is not a
path of $T-v_1$ is $p_r=n-p_1-\cdots-p_{r-1}-1$. The orders of $t-1$
components, which are paths, of $T-v_2$ are $q_1,\ldots, q_{t-1}$,
the order of the component which is not a path of $T-v_2$ is
$q_t=n-q_1-\cdots-q_{t-1}-1$. We denote this tree by $T=T_n(
p_1,\ldots,\, p_{r-1}; q_1,\ldots, \,q_{t-1})$, where $r\leq t$,
$p_1\geq \cdots \geq p_{r-1}$ and $q_1 \geq \cdots \geq q_{t-1}$.
\begin{figure}[ht!]
\begin{center}
\begin{tikzpicture}[scale=1,style=thick]
\def\vr{3pt}

\draw (-5.8,0) -- (-5.0,0) -- (-4.2,0) -- (-3.4,0) -- (-2.6,0)-- (-1.8,0) -- (-1.0,0)-- (-0.2,0) -- (0.6,0);
\draw (-3.4,0) -- (-3.4,0.8);
\draw (1.8,0.5) -- (2.4,0) -- (1.8,-0.5);
\draw (2.4,0) -- (3.2,0) -- (4.0,0) -- (4.8,0) -- (5.6,0);
\draw (6.4,0.5) -- (5.6,0) -- (6.4,-0.5);

\draw (-3.4,0.8)  [fill=black] circle (\vr);
\draw (-5.0,0)  [fill=black] circle (\vr);
\draw (-5.8,0)  [fill=black] circle (\vr);
\draw (-4.2,0)  [fill=black] circle (\vr);
\draw (-3.4,0)  [fill=black] circle (\vr);
\draw (-2.6,0)  [fill=black] circle (\vr);
\draw (-1.8,0)  [fill=black] circle (\vr);
\draw (-1.0,0)  [fill=black] circle (\vr);
\draw (-0.2,0)  [fill=black] circle (\vr);
\draw (0.6,0)  [fill=black] circle (\vr);

\draw (1.8,0.5)  [fill=black] circle (\vr);
\draw (1.8,-0.5)  [fill=black] circle (\vr);
\draw (2.4,0)  [fill=black] circle (\vr);

\draw (3.2,0)  [fill=black] circle (\vr);
\draw (4.0,0)  [fill=black] circle (\vr);
\draw (4.8,0)  [fill=black] circle (\vr);
\draw (5.6,0)  [fill=black] circle (\vr);

\draw (6.4,-0.5) [fill=black] circle (\vr);
\draw (6.4,0.5) [fill=black] circle (\vr);
\draw (-2.8,-0.8) node {$T_{10}(5,3,1)$};
\draw (4.2,-0.8) node {$T_9(1^2;1^2)$};

\end{tikzpicture}
\end{center}
\caption{ The trees $T_{10}(5,3,1)$ and $T_9(1^2;1^2)$}
\label{G1}
\end{figure}
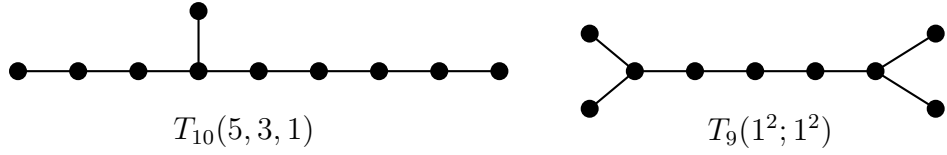

For convenience, when considering the trees
$T_n(n_1,\,n_2,\ldots,\,n_k,\ldots,\,n_m)$ or $T_n(
p_1,\ldots,\,p_k,\\\ldots,\,p_{r-1};$ $
q_1,\ldots,\,q_k,\ldots,\,q_{t-1})$, we use the symbols
$n_{k}^{l_{k}}$ or $p_{k}^{l_{k}}$ (resp. $q_{k}^{l_{k}}$) to
indicate that  the number of $n_{k}$ or $p_k$ (resp. $q_k$) is
$l_{k}>1$  in the following. For example,
$T_{15}(2,\,2,\,3,\,3,\,4)$ will be written
 as $T_{15}(2^{2},\,3^{2},\,4)$. As another two examples, the trees $T_{10}(5,\,3,\,1)$ and $T_9(1^2;\,1^2)$ are shown in Figure \ref{G1}.

 In the following lemma the partial result in \cite{LLL2010} are summarized.
\begin{lemma}\label{LEM5} {\em(\cite{LLL2010})} ~Suppose that $T$ is a tree of order  $n\geq 9$.
Then we have
\begin{eqnarray*}W(P_n)&>&W(T_n(n-3,\,1^2))>W(T_n(n-4,\,2,\,1))>W(T_n(1^2;\,1^2))>W(T_n(n-5,\,3,\,1))\\ &>&W(T_n(n-4,\,1^3))=W(T_n(1^2;\,2,\,1))>W(T_n(n-6,\,4,\,1))>W(T).\end{eqnarray*}\end{lemma}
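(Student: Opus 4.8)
\section*{Proof proposal}

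The plan is to reduce every comparison to the edge-splitting identity for trees, namely $W(T)=\sum_{e\in E(T)}n_1(e)n_2(e)$, where deleting $e$ leaves components of sizes $n_1(e)$ and $n_2(e)=n-n_1(e)$. First I would record the two anchors: $P_n$ classically maximizes $W$ over all trees of order $n$ (so it must head any such list), and $W(P_n)=\binom{n+1}{3}$. Then, for each of the seven named trees I would evaluate the identity branch by branch. For a pendant path of length $\ell$ hanging at a vertex whose removal leaves $n-\ell$ vertices on the other side, its edges contribute $\sum_{t=1}^{\ell}t(n-t)$, so every one of these near-path trees evaluates to $W(P_n)$ minus a correction that is \emph{linear} in $n$. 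For instance one finds $W(T_n(n-3,1^2))=W(P_n)-(n-3)$, $W(T_n(n-4,2,1))=W(P_n)-2(n-4)$, $W(T_n(n-5,3,1))=W(P_n)-3(n-5)$, $W(T_n(n-4,1^3))=W(P_n)-(3n-11)$ and $W(T_n(n-6,4,1))=W(P_n)-4(n-6)$, and the two double-branching trees are handled by the same bookkeeping.

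With these closed forms in hand, the chain of strict inequalities becomes a comparison of the linear correction terms, since a smaller correction means a larger Wiener index. Each individual comparison, e.g. $n-3<2(n-4)<3(n-5)$ and so on, is a one-line inequality in $n$, and the single equality $W(T_n(n-4,1^3))=W(T_n(1^2;2,1))$ is confirmed by checking that the two corrections agree identically. Carrying out all the comparisons simultaneously is exactly what fixes the admissible range of $n$ (the crossover points of these lines are where the stated threshold comes from), so this step is routine but must be done with care at the boundary.

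The substantive step is the final inequality $W(T)<W(T_n(n-6,4,1))$ for every tree $T$ outside the listed family. Here I would prove and then repeatedly apply a \emph{straightening} lemma: if a vertex $w$ carries two pendant paths of lengths $p\ge q\ge 1$ while the rest of the tree (of size $r\ge 1$) hangs at $w$, then transferring the end of the shorter path onto the longer one strictly increases $W$, the gain being $r(p-q+1)$, i.e. proportional to $r$ times the decrease in $pq$. Iterating this shows that any tree can be pushed monotonically toward a path, and that the only local configurations surviving near the top are a single long path with one short fork (the starlike trees $T_n(a,b,c)$) or two short forks (the double-branching trees). I would then use the explicit starlike formula $W(T_n(\ell_1,\dots,\ell_k))=\sum_i\big(\tfrac{n\ell_i(\ell_i+1)}{2}-\tfrac{\ell_i(\ell_i+1)(2\ell_i+1)}{6}\big)$, whose convexity in each $\ell_i$ forces the extremal branch distribution $(n-k,1^{k-1})$, to bound $W$ in a short case analysis on the number of leaves and branching vertices.

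The main obstacle is precisely this last step: the named trees interleave between three-leaf and four-leaf types (for example $T_n(1^2;1^2)$ sits above the three-leaf tree $T_n(n-5,3,1)$), so a crude ``fewer leaves beats more leaves'' argument fails and one genuinely needs the quantitative gains from the straightening lemma together with the explicit polynomial values to certify that nothing unlisted slips above $T_n(n-6,4,1)$. Organizing the case analysis so that it is exhaustive -- every tree with five or more leaves, every non-extremal starlike tree, and every non-extremal double-branching tree is shown to fall strictly below the cutoff -- is where essentially all the work lies, and it is also where the lower bound on $n$ is genuinely used.
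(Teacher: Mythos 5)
The paper offers no proof of this lemma to compare against: it is imported verbatim, with attribution, from \cite{LLL2010}. Your strategy --- the edge-cut identity $W(T)=\sum_{e}n_1(e)n_2(e)$, closed forms of the shape $W(P_n)-(\text{linear in }n)$ for each named tree, and a straightening transformation to dispose of everything else --- is exactly the route taken in that literature, and your individual ingredients check out: the corrections $n-3$, $2(n-4)$, $3(n-5)$, $3n-11$, $4(n-6)$ are correct, the two double-branching trees come out to $2n-6$ and $3n-11$ respectively (confirming the stated equality), and your gain formula $r(p-q+1)$ for the straightening move is right.

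Two issues remain. First, actually carrying out the ``routine'' comparison of the linear corrections refutes the lemma in the range you are asked to cover: the last strict inequality needs $3n-11<4n-24$, i.e.\ $n\ge 14$, and indeed $W(T_{10}(6,1^3))=146<149=W(T_{10}(4,4,1))$, while at $n=9$ one already has $W(T_9(1^2;1^2))=W(T_9(4,3,1))=108$, violating an earlier strict inequality (which needs $2n-6<3n-15$, i.e.\ $n>9$). So a faithful execution of your plan proves the chain only for $n\ge 14$ and must explicitly report that the stated threshold $n\ge 9$ is too small --- harmless for this paper, which only uses the lemma for $n>27$, but you cannot both claim the comparisons are routine and leave the threshold unexamined. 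Second, the inequality $W(T)<W(T_n(n-6,4,1))$ for every unlisted tree is the entire nontrivial content of the lemma, and your proposal only names the tools (straightening, the starlike formula, convexity) without performing the exhaustive classification over trees with many leaves, several branching vertices, or a vertex of degree at least $4$. Note also that since your straightening move \emph{increases} $W$, it bounds $W(T)$ from above only by the value of the tree you straighten \emph{to}; if that target is one of the listed trees above the cutoff, the bound is useless, so the case analysis must track targets carefully rather than merely pushing every tree toward $P_n$. As it stands this is a sound and well-chosen plan with correct arithmetic, but not yet a proof.
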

Combining Lemmas \ref{NEW4} and \ref{LEM5}, the following corollary
can be easily obtained.

\begin{corollary} \label{CO1} Suppose that $T$ is a tree of order  $n\geq 9$.
Then we have
\begin{eqnarray*}Kf(P_n)&>&Kf(T_n(n-3,\,1^2))>Kf(T_n(n-4,\,2,\,1))>Kf(T_n(1^2;\,1^2))>Kf(T_n(n-5,\,3,\,1))\\ &>&Kf(T_n(n-4,\,1^3))=Kf(T_n(1^2;\,2,\,1))>Kf(T_n(n-6,\,4,\,1))>Kf(T).\end{eqnarray*}
\end{corollary}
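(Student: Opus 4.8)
The plan is to exploit the fact that every graph appearing in the displayed chain is a tree, which forces Lemma~\ref{NEW4} to collapse from an inequality into an equality. First I would observe that each of $P_n$, the starlike trees $T_n(n-3,\,1^2)$, $T_n(n-4,\,2,\,1)$, $T_n(n-5,\,3,\,1)$, $T_n(n-4,\,1^3)$, $T_n(n-6,\,4,\,1)$, the two-branching-point trees $T_n(1^2;\,1^2)$ and $T_n(1^2;\,2,\,1)$, and the arbitrary tree $T$ is, by the very constructions recalled before Lemma~\ref{LEM5}, a tree of order $n$. Consequently Lemma~\ref{NEW4} applies to each of them in its equality regime: for any tree $H$ of order $n$ one has $W(H)=Kf(H)$.

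Next I would simply substitute this identity into the chain of strict inequalities and the single equality supplied by Lemma~\ref{LEM5}. Replacing each occurrence of $W(\cdot)$ by the corresponding $Kf(\cdot)$ converts
\[ W(P_n)>W(T_n(n-3,\,1^2))>\cdots>W(T_n(n-6,\,4,\,1))>W(T) \]
termwise into the asserted chain for $Kf$, and since each replacement is an exact equality no inequality is weakened, reversed, or introduced. In particular the lone equality $W(T_n(n-4,\,1^3))=W(T_n(1^2;\,2,\,1))$ transfers to $Kf(T_n(n-4,\,1^3))=Kf(T_n(1^2;\,2,\,1))$, and every strict inequality remains strict.

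The argument carries no genuine obstacle; the only item requiring care is the bookkeeping check that each symbol appearing in Lemma~\ref{LEM5} really denotes a tree, which is immediate from the definitions of starlike trees and of the trees $T_n(p_1,\ldots,\,p_{r-1};\,q_1,\ldots,\,q_{t-1})$, while the hypothesis $n\geq 9$ guarantees that all of these trees are well defined. Once this is confirmed, the corollary follows verbatim from Lemma~\ref{LEM5} via the equality clause of Lemma~\ref{NEW4}, with no further computation needed.
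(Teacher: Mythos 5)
Your proposal is correct and is exactly the paper's argument: the paper derives Corollary~\ref{CO1} by combining Lemma~\ref{NEW4} (whose equality case gives $W(H)=Kf(H)$ for every tree $H$) with the Wiener-index chain of Lemma~\ref{LEM5}, just as you do. No difference in approach and no gaps.
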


Let $P_n^k$ be the graph obtained by identifying a pendent vertex of  a path of length $n-k+1$ with one vertex of a cycle $C_k$.

\begin{lemma}\label{LEM6} {\em(\cite{Yang2008})} For any connected graph $G$ of order $n>3$ and with $n>3$ edges, we have
\begin{eqnarray*}
 Kf(G)\leq \frac{n^3-11n+18}{6}
 \end{eqnarray*}
 with equality if and only if $G\cong P_n^3$.
 \end{lemma}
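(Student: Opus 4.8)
The plan is to read the statement in the only way consistent with the extremal graph: since $P_n^3$ is a triangle with a pendant path, it has exactly $n$ vertices and $n$ edges, so $G$ ranges over connected \emph{unicyclic} graphs of order $n$. Such a $G$ has a unique cycle $C=c_1c_2\cdots c_g$ of girth $g\ge 3$, with a rooted tree $T_i$ (possibly a single vertex) hanging off each $c_i$; write $n_i=|T_i|$ and $\sigma_i=\sum_{p\in T_i}d_{T_i}(p,c_i)$. The crucial first step is that every $c_i$ is a cut vertex through which $T_i$ is attached, so the standard additivity of resistance distance across cut vertices applies: for $p,q$ in the same $T_i$ we get $r_G(p,q)=d_{T_i}(p,q)$, and for $p\in T_i$, $q\in T_j$ with $i\ne j$ we get $r_G(p,q)=d_{T_i}(p,c_i)+r_{C_g}(c_i,c_j)+d_{T_j}(q,c_j)$, where $r_{C_g}(c_i,c_j)=k_{ij}(g-k_{ij})/g$ with $k_{ij}$ the cycle distance. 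Summing over all pairs gives the clean decomposition
$$Kf(G)=\sum_i W(T_i)+\sum_{i<j}\bigl(n_j\sigma_i+n_i\sigma_j\bigr)+\frac1g\sum_{i<j}n_in_j\,k_{ij}(g-k_{ij}).$$

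With this identity I would maximize $Kf(G)$ by a short sequence of structure-improving reductions. First, for a fixed cycle and fixed tree orders $\{n_i\}$, each $T_i$ should be a path rooted at an endpoint, since among rooted trees of a given order the rooted path simultaneously maximizes $W(T_i)$ and the root-distance sum $\sigma_i$; this is the same ``path maximizes distances'' phenomenon already used for trees in Corollary~\ref{CO1}. Second, all of this pendant mass should be concentrated into a single path attached at one cycle vertex, reducing $G$ to the lollipop $P_n^g$. Third, the girth should be minimal, $g=3$. The third point is a one-variable comparison: substituting $n_1=n-g+1$ and $n_i=1$ $(i\ge2)$ into the identity, and using the cycle symmetry fact $\sum_{c_i\ne c_1}r_{C_g}(c_1,c_i)=(g^2-1)/6$, yields
$$Kf(P_n^g)=\frac{g^3-g}{12}+\binom{n-g+2}{3}+(n-g)\frac{g^2-1}{6}+(g-1)\frac{(n-g)(n-g+1)}{2},$$
and a routine expansion shows this is strictly decreasing in $g$ for $3\le g\le n$, so $g=3$ wins.

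Setting $g=3$ in the last display collapses everything to the claimed value. Equivalently one computes $P_n^3$ by hand: every pair inside the triangle has resistance $2/3$, a triangle vertex and the $j$-th path vertex have resistance $2/3+j$, and two path vertices have resistance equal to their path distance; the within-path pairs contribute $W(P_{n-2})=\binom{n-1}{3}$, and assembling the three blocks gives $Kf(P_n^3)=\frac{n^3-11n+18}{6}$. Since each reduction above is strict unless $G$ already equals $P_n^3$, this simultaneously establishes the upper bound and the equality characterization.

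The step I expect to be the genuine obstacle is the concentration move: proving that merging the pendant trees onto a single cycle vertex strictly increases $Kf$. Unlike the purely tree-theoretic consolidation, relocating a branch from $c_2$ to $c_1$ perturbs the cross-cycle terms $\frac1g\sum n_in_j k_{ij}(g-k_{ij})$ at the same time as the $\sum\bigl(n_j\sigma_i+n_i\sigma_j\bigr)$ terms, so the gain in the tree part must be shown to dominate any loss in the cycle part. I would control this by a grafting/transfer inequality: fix all data except the location and shape of one branch, express $Kf$ as an affine function of that branch's contribution, and compare the two endpoints of the move, using that attaching to the end of the longest existing path maximizes the marginal increase. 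Verifying the direction of this inequality uniformly in $g$ and in the branch being moved is the delicate point; a small check (e.g.\ $n=5$, where the lollipop gives $44/3$ against $40/3$ for a two-pendant triangle) confirms that concentration indeed wins and fixes the direction of the general argument.
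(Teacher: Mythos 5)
The paper offers no proof of this lemma: it is imported verbatim from Yang and Jiang \cite{Yang2008}, as is Lemma \ref{LEA9} (that $P_n^k$ uniquely maximizes $Kf$ among connected graphs with $n$ vertices, $n$ edges and cycle length $k$). Your reconstruction follows essentially that standard route --- cut-vertex additivity of resistance distance over the unique cycle, reduction to the lollipop, then optimization over the girth --- and both your decomposition of $Kf(G)$ and your closed form for $Kf(P_n^g)$ check out. As a proof, however, it has two holes. The first you flag yourself: the concentration step (that for fixed girth the lollipop is optimal, i.e., precisely Lemma \ref{LEA9}) is the hard core of the theorem, and you leave it as a plan for a grafting argument rather than an argument. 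A single numerical check at $n=5$ does not establish the direction of the transfer inequality uniformly in the girth, in the number of branches, or in which branch is relocated, and this is exactly where the cross-cycle terms can in principle fight the tree terms.

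The second hole is an outright false claim: $Kf(P_n^g)$ is \emph{not} strictly decreasing in $g$ on $3\le g\le n$. Your formula (equivalently $Kf(P_n^l)=\frac{n^3-2n}{6}+\frac{(1+2n)l}{4}+\frac{l^3}{4}-\frac{(3+2n)l^2}{6}$, quoted in the paper from \cite{Yang2008}) is a cubic in $g$ with positive leading coefficient whose derivative is negative at $g=3$ but positive near $g=n$ once $n$ is moderately large, so the function decreases and then increases. Concretely, $Kf(P_{20}^{19})=649>Kf(P_{20}^{18})\approx 647.17$, and $Kf(P_{10}^{10})=Kf(C_{10})=82.5>Kf(P_{10}^{9})\approx 82.33$. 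The conclusion survives because the maximum of such a cubic over $[3,n]$ is attained at an endpoint and $Kf(P_n^3)-Kf(C_n)=\frac{n^3-21n+36}{12}>0$ for all $n\ge 4$, but your final step must be recast as this endpoint comparison rather than as monotonicity.
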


 Denote by $C_{p,q}^{l}$ the graph which is formed by two disjoint cycles $C_p$ and $C_q$ linked by a path of length $l$ (see Figure \ref{G2}). In  \cite{ZhangYang09}, the authors
 determined the graph which maximizes the Kirchhoff index among all connected graphs of order $n$ with $n$ edges and exactly two cycles. Recently, Feng, Yu et al. and one of the
 present authors \cite{FYXJ2014} completely characterized the extremal graph with maximal Kirchhoff index among all connected graphs of order $n$ and with $n+1$ edges.
 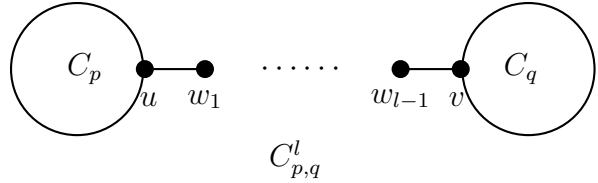
\begin{figure}[ht!]
\begin{center}
\begin{tikzpicture}[scale=1,style=thick]
\def\vr{3pt}
\draw (0,0) circle (25pt);
\draw (6,0) circle (25pt);
\draw (0.9,0) -- (1.7,0);
\draw (4.3,0) -- (5.1,0);

\draw (0.9,0)  [fill=black] circle (\vr);
\draw (5.1,0)  [fill=black] circle (\vr);
\draw (1.7,0)  [fill=black] circle (\vr);
\draw (4.3,0)  [fill=black] circle (\vr);
\draw (3.0,0) node {$\cdots\cdots$};

\draw (0.1,0) node {$C_p$};
\draw (5.9,0) node {$C_q$};
\draw (0.95,-0.4) node {$u$};
\draw (5.05,-0.4) node {$v$};

\draw (1.7,-0.4) node {$w_1$};
\draw (4.3,-0.4) node {$w_{l-1}$};

\draw (2.9,-1.2) node {$C_{p,q}^{l}$};

\end{tikzpicture}
\end{center}
\caption{ The graph $C_{p,q}^{l}$}
\label{G2}
\end{figure}

 \begin{lemma}\label{LEM7} {\em(\cite{FYXJ2014})} Let $G$ be a connected graph of order $n$ and with $n+1$ edges $(n\geq 8)$. Then we have
  \begin{eqnarray*}
 Kf(G)\leq \frac{n^3-21n+36}{6}
 \end{eqnarray*} with equality if and only if $G\cong C_{3,3}^{n-5}$.
 \end{lemma}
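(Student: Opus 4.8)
The plan is to use the fact that a connected graph $G$ of order $n$ with $n+1$ edges has circuit rank $(n+1)-(n-1)=2$, i.e.\ $G$ is \emph{bicyclic}. Writing $B$ for the base of $G$ (the subgraph obtained by repeatedly deleting pendant vertices), $G$ is recovered from $B$ by attaching rooted trees to its vertices, and $B$ is one of three types: two cycles meeting in a single vertex ($\infty$-graph), two cycles sharing a common path ($\theta$-graph), or two vertex-disjoint cycles linked by a path (the dumbbell, whose instances are the $C_{p,q}^{l}$). The strategy is to exhibit a finite chain of local surgeries, each \emph{strictly} increasing $Kf$, that carries an arbitrary bicyclic $G$ to $C_{3,3}^{n-5}$; strict monotonicity then identifies $C_{3,3}^{n-5}$ as the unique maximizer, and a direct evaluation supplies the value $\frac{n^3-21n+36}{6}$.

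First I would straighten and consolidate the attached trees. In the spirit of Corollary \ref{CO1}, a branch-shifting transformation that makes the graph more path-like raises every affected resistance distance and hence $Kf$: if some vertex carries a branching tree, replacing it by a single pendant path strictly increases $Kf$, and likewise a pendant path attached at an interior vertex of the connecting path can be absorbed to lengthen that path. Iterating, one reduces to a graph whose only non-base structure is a path, and in fact (in the dumbbell case) to a graph with no pendant vertices at all, the surplus vertices having been pushed into the linking path.

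For the base itself I would work with the cut-vertex decomposition of the Kirchhoff index: if a cut vertex $v$ splits $G$ into $G_1$ (order $n_1$) and $G_2$ (order $n_2$) with $n=n_1+n_2-1$, then $Kf(G)=Kf(G_1)+Kf(G_2)+(n_2-1)R_{G_1}(v)+(n_1-1)R_{G_2}(v)$, where $R_{G_i}(v)=\sum_{u}r_{G_i}(v,u)$. Using this together with the closed forms $r_{C_3}(x,y)=\tfrac23$ and $r_{P}(w_i,w_j)=|i-j|$, I would (i) compare the three base types at fixed $n$ and show the dumbbell strictly dominates the $\theta$- and $\infty$-configurations, and (ii) within the dumbbell family $C_{p,q}^{l}$ with $p+q+l$ fixed, show that shrinking a cycle $C_k$ with $k\ge4$ to $C_{k-1}$ while lengthening the linking path by one edge strictly increases $Kf$. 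Step (ii) forces $p=q=3$, i.e.\ two triangles, and a maximal linking path $l=n-5$, yielding $C_{3,3}^{n-5}$.

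The remaining step is computational: summing $r_G(x,y)$ over all $\binom{n}{2}$ pairs of $C_{3,3}^{n-5}$ — intra-triangle pairs at $\tfrac23$, consecutive path vertices at $1$, and all mixed pairs split additively through the intervening cut vertices — and simplifying gives $\frac{n^3-21n+36}{6}$; the hypothesis $n\ge8$ guarantees the linking path is long enough for this normal form to be attained. The principal obstacle is the second stage: the bare monotonicity of Lemma \ref{NEW1} does not decide the cycle-shrinking and base-comparison moves, since shrinking a cycle lowers its internal resistances while the compensating lengthening of the path raises the cross terms, so each such move needs an explicit effective-resistance (or, via (\ref{e1}), spectral) inequality that must be verified to hold with the correct sign for every admissible cycle length and every boundary configuration — this case analysis, and in particular ruling out the $\theta$-graph, is where the real difficulty lies.
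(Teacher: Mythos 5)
First, note that the paper does not prove this lemma at all: it is quoted verbatim from \cite{FYXJ2014}, so there is no internal proof to measure your attempt against. Judged on its own terms, your proposal identifies the correct framework --- $G$ is bicyclic, its base is an $\infty$-, $\theta$- or dumbbell graph, and one should push $G$ toward $C_{3,3}^{n-5}$ by $Kf$-increasing surgeries using the cut-vertex formula of Lemma \ref{LEM8} --- and this is indeed the route taken in the cited source and in the earlier bicyclic paper \cite{ZhangYang09}. But as written it is a plan, not a proof: every step that actually carries the weight of the lemma is asserted rather than established.

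Concretely, three inequalities are left unverified, and none of them follows from the edge-deletion monotonicity of Lemma \ref{NEW1} or from Corollary \ref{CAD1} alone. (i) That the dumbbell base strictly dominates the $\theta$- and $\infty$-bases at fixed $n$: passing from a $\theta$-graph to a dumbbell is not an edge deletion within the class of graphs with $n+1$ edges, so Lemma \ref{NEW1} does not apply, and an explicit resistance (or spectral) comparison of the three bases with trees attached is needed. (ii) That within $C_{p,q}^{l}$ with $p+q+l$ fixed, shrinking a cycle while lengthening the path increases $Kf$: as you yourself note, the two effects pull in opposite directions, and the sign must be checked by computing both sides via Lemma \ref{LEM8}; the unicyclic analogue (Lemma \ref{LEA9} together with the formula for $Kf(P_n^l)$) is suggestive but is not a proof for the bicyclic case. (iii) That a pendant tree hanging off any vertex is beaten by absorbing its vertices into the linking path: Corollary \ref{CAD1} only converts a pendant tree into a pendant path rooted at the same vertex; relocating that path to extend the link is a further comparison of two distinct graphs of equal size, again requiring a computation with $Kf_x$. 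Until (i)--(iii) are carried out --- together with the closed-form evaluation of $Kf(C_{3,3}^{n-5})$, which you also defer --- the bound and, in particular, the uniqueness of the extremal graph are not established. Since these verifications are exactly the content of \cite{FYXJ2014}, the honest conclusion is that your outline is a correct road map but not yet a proof.
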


 An invariant related to Kirchhoff index is defined \cite{Yang2008} as follows: $Kf_{v_i}(G)=\sum\limits_{j\neq i}r_G(v_i,v_j)$. In the following lemma a nice formula is
 presented on Kirchhoff index of a graph with cut vertices.
 \begin{lemma}\label{LEM8}{\em(\cite{ZhangYang09})} Let $x$ be a cut vertex of connected graph $G$ such that $G=G_1\bigcup G_2$, $V(G_1)\bigcap V(G_2)=\{x\}$ and $|V(G_i)|=n_i$
 for $i=1,2$. Then we have \begin{eqnarray*}Kf(G)=Kf(G_1)+Kf(G_2)+(n_1-1)Kf_x(G_2)+(n_2-1)Kf_x(G_1).\end{eqnarray*}
 \end{lemma}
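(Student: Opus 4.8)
The plan is to evaluate $Kf(G)$ directly from the definition $Kf(G)=\sum_{i<j}r_G(v_i,v_j)$, splitting the sum according to the two blocks meeting at the cut vertex $x$. Two elementary properties of resistance distance at a cut vertex will do all the work. First, if $u,v$ lie in the same block, say $u,v\in V(G_1)$, then $r_G(u,v)=r_{G_1}(u,v)$: when the effective resistance between $u$ and $v$ is measured in the network $N$, no current enters the block $G_2$, since $x$ is the only vertex joining $G_2$ to the rest of $G$ and $G_2$ contains neither source nor sink, so $G_2$ can be deleted without affecting the measurement. Second, resistance distance is additive across $x$: if $u\in V(G_1)\setminus\{x\}$ and $w\in V(G_2)\setminus\{x\}$, then all current from $u$ to $w$ must traverse $x$, so by the series law $r_G(u,w)=r_G(u,x)+r_G(x,w)=r_{G_1}(u,x)+r_{G_2}(x,w)$, the last equality using the first property again. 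Both facts are standard consequences of Kirchhoff's laws (equivalently, of the expression of resistance distance through the generalized inverse of $L(G)$).

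Next I would fix the notation $V(G_1)=\{x,a_1,\ldots,a_{n_1-1}\}$ and $V(G_2)=\{x,b_1,\ldots,b_{n_2-1}\}$, so that $V(G)=V(G_1)\cup V(G_2)$ with $V(G_1)\cap V(G_2)=\{x\}$, and partition the unordered pairs of distinct vertices of $G$ into three classes: those with both endpoints in $V(G_1)$, those with both endpoints in $V(G_2)$, and the $(n_1-1)(n_2-1)$ mixed pairs $\{a_i,b_j\}$. A quick count confirms this is a genuine partition of all $\binom{n_1+n_2-1}{2}$ pairs. By the first property the pairs inside $V(G_1)$ contribute $\sum_{\{u,v\}\subseteq V(G_1)}r_{G_1}(u,v)=Kf(G_1)$, and likewise the pairs inside $V(G_2)$ contribute $Kf(G_2)$; here the pairs $\{x,a_i\}$ are counted once in $Kf(G_1)$ and the pairs $\{x,b_j\}$ once in $Kf(G_2)$, so $x$ is accounted for correctly and nothing is double counted.

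It remains to evaluate the mixed contribution, for which I would apply the additivity property termwise and separate the double sum:
\begin{eqnarray*}
\sum_{i=1}^{n_1-1}\sum_{j=1}^{n_2-1}r_G(a_i,b_j)
&=&\sum_{i=1}^{n_1-1}\sum_{j=1}^{n_2-1}\big(r_{G_1}(a_i,x)+r_{G_2}(x,b_j)\big)\\
&=&(n_2-1)\sum_{i=1}^{n_1-1}r_{G_1}(a_i,x)+(n_1-1)\sum_{j=1}^{n_2-1}r_{G_2}(x,b_j).
\end{eqnarray*}
Recognizing $\sum_{i}r_{G_1}(a_i,x)=Kf_x(G_1)$ and $\sum_{j}r_{G_2}(x,b_j)=Kf_x(G_2)$, the mixed pairs contribute $(n_2-1)Kf_x(G_1)+(n_1-1)Kf_x(G_2)$. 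Adding the three contributions gives exactly the claimed identity.

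I do not expect any deep obstacle here: the whole argument rests on the two resistance-distance facts at a cut vertex, and the rest is bookkeeping. The only point needing genuine care is the partition of pairs, in particular ensuring that the shared vertex $x$ contributes to each block exactly once and to the mixed class not at all; getting this accounting right is precisely what produces the coefficients $n_1-1$ and $n_2-1$.
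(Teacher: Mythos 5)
Your proof is correct; note that the paper itself states this lemma without proof, importing it from \cite{ZhangYang09}, and your argument is precisely the standard derivation used there: partition the unordered pairs at the cut vertex, invoke the locality of resistance distance within a block and its series additivity across $x$, and count the mixed pairs to obtain the coefficients $n_1-1$ and $n_2-1$. The bookkeeping (including the verification that the three classes of pairs exhaust all $\binom{n_1+n_2-1}{2}$ pairs and that $x$ is counted once in each of $Kf(G_1)$ and $Kf(G_2)$) is handled correctly, so there is nothing to add.
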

Note that (\cite{XLDGF2014}) $P_n$ has uniquely the largest Wiener index among all trees of order $n$. From Lemma \ref{LEM8}, the corollary below follows immediatey.
 \begin{corollary} \label{CAD1} Let $G_0$ be a connected graph with $v_0\in V(G_0)$ and $T_t$ a tree of order $t\geq 2$ with $x\in V(T_t)$. Assume that $G$ is a graph obtained by identifying the vertex $v_0$ in $G_0$ with $x\in T$ and $G^{\prime}$ is obtained by identifying $v_0 \in G_0$ with a pendent vertex of path $P_t$. Then \begin{eqnarray*}Kf(G)\leq Kf(G^{\prime})\end{eqnarray*} with equality holding if and only if $G\cong G^{\prime}$, i.e., $T_t\cong P_t$ with $x$ being a pendent in $T_t$. \end{corollary}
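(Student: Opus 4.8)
The plan is to apply the cut-vertex formula of Lemma \ref{LEM8} to both graphs, taking the identified vertex $v_0=x$ as the cut vertex. Writing $n_1=|V(G_0)|$ and decomposing $G=G_0\cup T_t$ with $V(G_0)\cap V(T_t)=\{v_0\}$, Lemma \ref{LEM8} gives
\[
Kf(G)=Kf(G_0)+Kf(T_t)+(n_1-1)Kf_x(T_t)+(t-1)Kf_{v_0}(G_0),
\]
and the same decomposition of $G'=G_0\cup P_t$ yields
\[
Kf(G')=Kf(G_0)+Kf(P_t)+(n_1-1)Kf_{x'}(P_t)+(t-1)Kf_{v_0}(G_0),
\]
where $x'$ is the chosen pendent vertex of $P_t$. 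Subtracting, the terms $Kf(G_0)$ and $(t-1)Kf_{v_0}(G_0)$, which depend only on $G_0$ and $v_0$, cancel, so that
\[
Kf(G')-Kf(G)=\bigl(Kf(P_t)-Kf(T_t)\bigr)+(n_1-1)\bigl(Kf_{x'}(P_t)-Kf_x(T_t)\bigr).
\]
It therefore suffices to show that each of the two bracketed differences is nonnegative.

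For the first difference I would invoke that Kirchhoff and Wiener indices coincide on a tree (Lemma \ref{NEW4}), so $Kf(T_t)=W(T_t)$ and $Kf(P_t)=W(P_t)$; the cited fact that $P_t$ uniquely maximizes the Wiener index among all trees of order $t$ (\cite{XLDGF2014}) then gives $Kf(P_t)\ge Kf(T_t)$, with equality if and only if $T_t\cong P_t$.

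The second difference is the substantive step. Since resistance distance coincides with ordinary distance in a tree, $Kf_x(T_t)=\sum_{u\neq x}d_{T_t}(x,u)$ is just the transmission of $x$, and I must show that no vertex of any tree of order $t$ has transmission exceeding that of an endpoint of $P_t$, namely $\binom{t}{2}$. Rooting $T_t$ at $x$ and letting $n_k$ denote the number of vertices at distance $k$ from $x$, the key structural observation is that in a tree the set $\{k:n_k>0\}$ is an initial segment $\{0,1,\dots,e\}$ with $n_0=1$ and $n_k\ge 1$ throughout, where $e=\mathrm{ecc}(x)\le t-1$. Maximizing $\sum_k k\,n_k$ subject to $\sum_k n_k=t$ then forces all the excess mass to the outermost level, and a short computation shows the maximum equals $\binom{t}{2}$, attained exactly when $e=t-1$, i.e.\ when $T_t\cong P_t$ and $x$ is an endpoint. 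Hence $Kf_{x'}(P_t)\ge Kf_x(T_t)$, again with equality precisely in the path-endpoint case.

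Combining the two inequalities with $n_1-1\ge 0$ yields $Kf(G)\le Kf(G')$. For the equality analysis, when $n_1\ge 2$ the coefficient $n_1-1$ is positive, so equality forces both differences to vanish, hence $T_t\cong P_t$ with $x$ pendent; when $G_0$ is a single vertex the statement collapses to $Kf(T_t)\le Kf(P_t)$ and the first difference alone controls equality. The only real obstacle is the transmission extremal argument of the preceding paragraph; everything else is bookkeeping once the cut-vertex formula is in hand.
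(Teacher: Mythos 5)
Your proof is correct and follows the same route the paper intends: apply Lemma \ref{LEM8} to both graphs at the identified cut vertex, cancel the $G_0$-dependent terms, and compare the two $T_t$-dependent quantities with their $P_t$ counterparts. The paper merely asserts that the corollary ``follows immediately'' from Lemma \ref{LEM8} together with the Wiener-maximality of $P_t$; you additionally supply (correctly) the needed second ingredient, the transmission bound $Kf_x(T_t)\leq\binom{t}{2}$ with equality only for an endpoint of a path, which the paper leaves implicit.
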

 \begin{lemma}\label{LEA9} {\em(\cite{Yang2008})}
 Among all connected graph of order $n$ with $n$ edges and cycle length $k$, the graph $P_n^k$ has uniquely the maximal Kirchhoff index. \end{lemma}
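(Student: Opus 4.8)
The plan is to prove the statement by transforming an arbitrary competitor $G$ into $P_n^k$ through a finite sequence of moves, each of which strictly increases the Kirchhoff index unless the graph is already $P_n^k$. The starting observation is structural: a connected graph of order $n$ with exactly $n$ edges is unicyclic, and if its (unique) cycle has length $k$, then $G$ is the cycle $C_k$ together with a forest whose trees are each rooted at a single cycle vertex, since a tree joined to two distinct cycle vertices would create a second cycle. Thus $G$ is described by $C_k$ with trees $T_1,\dots,T_m$ attached at distinct cycle vertices $u_1,\dots,u_m$, using $n-k$ extra vertices in total. (If $k=n$ then $G=C_n=P_n^n$ is the only such graph and there is nothing to prove, so assume $k<n$.)

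For the first reduction I would turn every attached tree into a pendant path. Fixing $i$, I set $G_0$ to be $G$ with $T_i$ removed, leaving $u_i$ on the cycle; then $G_0$ is connected and contains $C_k$, and applying Corollary \ref{CAD1} with $v_0=u_i$ shows that replacing $T_i$ by a pendant path hanging at $u_i$ does not decrease $Kf$, with strict increase unless $T_i$ was already such a path. Performing this at every attachment vertex reduces the problem to the case where $G$ is $C_k$ with pendant paths hanging at $u_1,\dots,u_m$.

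The second, and essential, reduction is to merge these pendant paths onto a single cycle vertex. Suppose $m\ge 2$; let the paths at $u:=u_1$ and $w:=u_2$ carry $a$ and $b$ new vertices, and let $B$ be the cycle together with all remaining attachments, so $B\supseteq C_k$ and $|B|=n-a-b$. I would compare the split configuration with the two single-path configurations via Lemma \ref{LEM8}, taking the relevant attachment vertex as the cut vertex and using that a pendant path contributes the same $Kf_{\cdot}$ from either endpoint; only the mixed terms change. Writing $z$ for the free end of the $a$-path and $G_0=B$ plus that $a$-path at $u$, this gives
\[
Kf(\text{one path of length }a+b\text{ at }u)-Kf(\text{split})=b\,[\,Kf_z(G_0)-Kf_w(G_0)\,],
\]
and a routine resistance computation (resistances are additive through the cut vertex $u$ and equal distances along tree parts) yields $Kf_z(G_0)-Kf_w(G_0)=a\,[(|B|-1)-r_B(u,w)]+[Kf_u(B)-Kf_w(B)]$, together with the symmetric expression obtained by instead relocating the $a$-path onto the $b$-path.

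The main obstacle is to certify that at least one of these two consolidations strictly helps. Setting $M:=|B|-1-r_B(u,w)$ and $\Delta:=Kf_u(B)-Kf_w(B)$, the two gains are $b(aM+\Delta)$ and $a(bM-\Delta)$; if both were nonpositive one would get $(a+b)M\le 0$, forcing $M=0$. But $B$ contains a cycle, hence is not a path, so $r_B(u,w)\le d_B(u,w)\le |B|-2<|B|-1$ — any chord of a would-be Hamiltonian $u$–$w$ path strictly shortens the distance — giving $M\ge 1>0$, a contradiction. Therefore one merge strictly raises $Kf$ and lowers the number of attachment vertices by one; iterating collapses all pendant paths into a single pendant path of length $n-k$, i.e.\ into $P_n^k$. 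Combined with the first reduction, this shows $Kf(G)\le Kf(P_n^k)$ with equality if and only if $G\cong P_n^k$. The only genuinely delicate point is the estimate $r_B(u,w)<|B|-1$, which is exactly where the presence of the cycle (and simplicity of the graph) enters; the rest is bookkeeping with Lemma \ref{LEM8} and Corollary \ref{CAD1}.
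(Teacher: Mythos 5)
Your proof is correct, but note that the paper itself contains no proof of this lemma to compare against: it is imported verbatim from \cite{Yang2008}, where Yang and Jiang establish it by explicit resistance-distance formulas for unicyclic graphs and a direct analysis of graph transformations. Your route is genuinely different and, in the context of this paper, more economical: you run a two-stage reduction using only tools already stated here, namely Corollary \ref{CAD1} to straighten every attached tree into a pendant path, and Lemma \ref{LEM8} to compare the ``split'' configuration with the two possible consolidations of a pair of pendant paths. The bookkeeping is right: in the Lemma \ref{LEM8} comparison the terms $Kf(G_0)$, $Kf(P_{b+1})$ and $(n-b-1)Kf_x(P_{b+1})$ are common to both graphs because a path has the same $Kf_{\cdot}$ value at either endpoint, leaving exactly $b\,[Kf_z(G_0)-Kf_w(G_0)]$, and your expansion of that difference via additivity of resistance through the cut vertex $u$ is correct. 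The crux --- certifying that at least one of the two merges strictly helps --- is also handled correctly: from $b(aM+\Delta)\le 0$ and $a(bM-\Delta)\le 0$ with $a,b\ge 1$ you get $(a+b)M\le 0$, while $B\supseteq C_k$ is not a path, so no shortest $u$--$w$ path in $B$ can be Hamiltonian, whence $r_B(u,w)\le d_B(u,w)\le |B|-2$ and $M\ge 1$, a contradiction; this is where the cycle is genuinely used, and it is exactly the point that would fail if $B$ were a path (i.e.\ for trees, where the extremal graph is $P_n$ by a different argument). The equality analysis also goes through, since every graph with at least two attachment sites admits a strictly improving move and every non-path attachment is strictly improved by Corollary \ref{CAD1}. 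Two small points worth making explicit: each $T_i$ attached at $u_i$ has at least one non-cycle vertex, so $a,b\ge 1$; and after the first reduction $B$ remains connected and contains both $u$ and $w$, so Lemma \ref{LEM8} applies.
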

\section{Main results}
\ \indent In this section, we will order all the graphs from ${\cal{G}}(n)$ with $n$ being not very small by their Kirchhoff indices. In what follows, we will deal with the
two cases, respectively, for graphs from ${\cal{G}}(n)$ with smaller Kirchhoff indices and with larger Kirchhoff indices.

\subsection{The ordering of connected graphs with smaller Kirchhoff indices}
\ \indent Lukovits et al. \cite{Lukovits99}
showed that, among all connected graphs of order $n$,  $Kf(G) \geq n -1$ with equality if and only if $G$ is
complete graph $K_n$. In the following it suffices to order the graphs from ${\cal{G}}(n)\setminus \{K_n\}$ by their Kirchhoff indices.

For convenience, for a subgraph $G_0$ of $K_n$, we denote by $K_n-G_0$ the graph obtained by deleting all edges of $G_0$ from $K_n$. From the structure of $K_n-G_0$, we claim
that $\overline{K_n-G_0}\cong \overline{K_{n-|V(G_0)|}}\bigcup G_0$.  For the consistency of sign, we write $G_1(n)=K_n$ and $G_2(n)=K_n-K_2$. Moreover, let $G_3(n)=K_n-2K_2$
and $G_4(n)=K_n-K_{1,2}$. Next we consider the graphs obtained by deleting three edges from $K_n$.
Assume that
$$G_5(n)=K_n-3K_2;~~~
G_6(n)=K_n-(K_{1,2}\cup K_2);~~~
G_7(n)=K_n-P_4;$$
$$G_8(n)=K_n-C_3;~~~
G_9(n)=K_n-K_{1,3}. $$

In the following theorem the graphs from ${\cal{G}}(n)$ with $n\geq 11$ and with first to ninth minimal Kirchhoff indices are completely determined.
\begin{theorem}\label{TH1.0} {\em (\cite{DXG2012})} Let $n\geq 11$ and  $G\in {\cal{G}}(n)$ but other than any graph from the set $\{G_i(n)|i\in \{1,2,\ldots,9\}\}$. Then we have
\begin{eqnarray*}Kf(G)>Kf(G_9(n))>Kf(G_8(n))>Kf(G_7(n))>Kf(G_6(n))>Kf(G_5(n))\\ \ \indent\ \indent>Kf(G_4(n))>Kf(G_3(n))>Kf(G_2(n))>Kf(G_1(n)).\end{eqnarray*} \end{theorem}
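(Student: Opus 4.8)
The plan is to turn the statement into an explicit computation for the nine graphs $G_1(n),\dots,G_9(n)$ and a monotonicity argument for everything else. The starting point is the structural remark from the excerpt that each $G_i(n)$ equals $K_n-G_0$ for a fixed small graph $G_0$, and that $\overline{K_n-G_0}\cong G_0\cup\overline{K_{n-|V(G_0)|}}$. Consequently the Laplacian spectrum of the complement is simply the Laplacian spectrum of $G_0$ padded with extra zeros, and Lemma~\ref{NEW2} converts it into the spectrum of $G_i(n)$. Writing $t=|V(G_0)|$, letting $c$ be the number of components of $G_0$ and $\nu_1,\dots,\nu_s$ its nonzero Laplacian eigenvalues (so $s=t-c$), I would first establish
\[ S(K_n-G_0)=\{\,n^{(n-1-s)},\,n-\nu_1,\,\dots,\,n-\nu_s,\,0\,\}, \]
and then feed this into the Gutman--Mohar formula~(\ref{e1}) to obtain the closed form
\[ Kf(K_n-G_0)=(n-1-s)+n\sum_{j=1}^{s}\frac{1}{\,n-\nu_j\,}. \]
Substituting the (tiny) Laplacian spectra of $K_2,2K_2,K_{1,2},3K_2,K_{1,2}\cup K_2,P_4,C_3,K_{1,3}$ yields all nine values at once.

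With the nine closed forms in hand, the chain $Kf(G_9(n))>\cdots>Kf(G_1(n))$ is just eight pairwise comparisons. Each difference $Kf(G_{i+1}(n))-Kf(G_i(n))$ collapses to a single rational function of $n$; clearing the positive denominators leaves a low-degree polynomial whose positivity I would verify for $n\ge 11$. It is convenient to write every value in the normalized shape $(n-1)+(\text{rational tail})$: then, for example, the $G_8$--$G_9$ comparison reduces to $\tfrac{4}{n-4}+\tfrac{2}{n-1}-\tfrac{6}{n-3}>0$, whose numerator is the constant $12$, while other comparisons produce numerators such as $2n$ or $2(n^2-8n+4)$. This step is mechanical once the spectra are fixed, and it is precisely here that the admissible range of $n$ is pinned down.

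The substantive part is to show that every connected graph $G$ of order $n$ outside $\{G_1(n),\dots,G_9(n)\}$ has $Kf(G)>Kf(G_9(n))$. The key observation is that $G_1,\dots,G_9$ are \emph{exactly} the graphs obtained from $K_n$ by deleting at most three edges (one type each for $0$ and $1$ deletions, the two types $2K_2,K_{1,2}$ for $2$ deletions, and the five three-edge subgraphs $3K_2,K_{1,2}\cup K_2,P_4,C_3,K_{1,3}$). Hence any other $G$ is $K_n$ with at least four edges removed. Since $G$ is connected, I can reinsert edges one at a time until exactly four are missing, reaching a connected $\widehat G$ with $G\subseteq\widehat G$; by Lemma~\ref{NEW1}$(2)$ each insertion strictly lowers the index, so $Kf(G)\ge Kf(\widehat G)$, and the result follows once every four-edge deletion is shown to beat $G_9(n)$. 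Applying the closed-form formula to the finitely many four-edge subgraphs $G_0$, I would show the minimum is attained by the matching $4K_2$ (the most ``spread out'' deletion, keeping the spectrum closest to that of $K_n$) and then check the single inequality
\[ Kf(K_n-4K_2)-Kf(G_9(n))=\frac{8}{n-2}-\frac{4}{n-4}-\frac{2}{n-1}=\frac{2(n^2-8n+4)}{(n-2)(n-4)(n-1)}>0\qquad(n\ge 11). \]

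The hard part is this last step, for two reasons. First, one must justify that $K_n-4K_2$ is really the minimizer over all four-edge deletions; the cleanest route is a convexity/majorization argument for $\sum_j 1/(n-\nu_j)$ under the constraint $\sum_j\nu_j=8$, which (after the constant $n-1-s$ cancels) reduces minimizing $Kf$ to minimizing the first Zagreb index $\sum_v d_v^{2}$ of $G_0$, uniquely achieved by the matching; alternatively one simply enumerates the dozen-odd isomorphism types and compares each against $G_9(n)$. Second, and more delicately, the winning margins here (and in several of the $G_i$-comparisons) shrink to $0$ as $n\to\infty$, so their positivity is a genuinely quantitative fact rather than an asymptotic one. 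This is exactly why a lower bound of the form $n\ge 11$ is forced, and it is the reason the theorem must carry such a hypothesis.
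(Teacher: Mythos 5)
The paper does not actually prove Theorem~\ref{TH1.0}; it is quoted from \cite{DXG2012} as a known result, so there is no in-text proof to compare yours against line by line. Judged on its own, your argument is correct and uses exactly the circle of ideas the paper deploys elsewhere (for Theorems~\ref{NEWTH1} and \ref{Weak}): pass to the complement via Lemma~\ref{NEW2}, apply the Gutman--Mohar formula~(\ref{e1}), and use the edge-monotonicity of Lemma~\ref{NEW1}. Your spectrum formula $S(K_n-G_0)=\{n^{(n-1-s)},\,n-\nu_1,\dots,n-\nu_s,\,0\}$ and the resulting closed form are right; the observation that $G_1(n),\dots,G_9(n)$ are precisely the deletions of at most three edges from $K_n$ is right; and both displayed numerators check out (the $G_8$--$G_9$ difference does reduce to the constant $12$, and $8(n-4)(n-1)-4(n-2)(n-1)-2(n-2)(n-4)=2(n^2-8n+4)$, positive for $n\geq 8$).

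The one soft spot is the claim that minimizing $Kf$ over four-edge deletions ``reduces to minimizing the first Zagreb index of $G_0$.'' That reduction is only the leading term of an expansion in $1/n$: from $n/(n-\nu)=1+\nu/n+\nu^2/n^2+\cdots$ and $\sum_j\nu_j^2=M_1(G_0)+2|E(G_0)|$ one gets $Kf(K_n-G_0)=n-1+8/n+(M_1(G_0)+8)/n^2+O(n^{-3})$, and in addition the constant $n-1-s$ is not the same for all four-edge $G_0$ (it increases by one for each independent cycle of $G_0$, since $s=|V(G_0)|-c$ drops), so as written this is a heuristic rather than a proof that $4K_2$ is the minimizer. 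Your fallback of enumerating the eleven four-edge graphs without isolated vertices is perfectly adequate; more economically, the paper's own Theorem~\ref{NEWTH1} with $p=4$ already establishes, via an AM--HM argument on the complement spectrum, that $K_n-4K_2$ uniquely minimizes $Kf$ among all four-edge deletions, and you could simply invoke it. With either repair the proof is complete.
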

In view of Theorem \ref{TH1.0}, naturally we will ask a related problem as follows:

\textit{For an integer $4\leq p\leq \Big\lfloor\displaystyle{\frac{n}{2}}\Big\rfloor$, which graph has the extremal Kirchhoff index among all connected graphs obtained by deleting $p$ edges from $K_n$?}

Before solving the above problem, we need a related lemma as follows:
 \begin{lemma} \em{(\cite{DA0})} \label{m2}
 Let $G$ be a connected graph with at least one edge. Then
 \begin{equation}                    \label{t10}
 \mu_1(G)\leq \max_{v_iv_j\in E(G)}|N_i\cup N_j|
 \end{equation}
 where $N_i$ is the neighbor set of vertex $v_i\in V(G)$\,. This upper bound for $\mu_1(G)$ does not exceed $n$.
 \end{lemma}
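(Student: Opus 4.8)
The plan is to extract the bound directly from the eigenvector attached to $\mu_1(G)$, using the two rows of the eigenvalue equation indexed by a carefully chosen edge. First I would let $x=(x_1,\dots,x_n)^{T}$ be an eigenvector of $L(G)$ for $\mu_1:=\mu_1(G)$, and rescale it (replacing $x$ by $-x$ if necessary) so that the entry of largest absolute value is positive; say $x_p=\max_k|x_k|>0$. Since $G$ has at least one edge we have $\mu_1>0$, so $x$ is orthogonal to the all-ones vector and is non-constant. Among the neighbours of $v_p$ I would then select $v_q\in N_p$ with $x_q=\min\{x_k:v_k\in N_p\}$; note that $v_pv_q\in E(G)$, so this choice identifies a concrete edge.

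Next, reading off the $p$-th and $q$-th coordinates of $L(G)x=\mu_1 x$ gives
\begin{equation*}
\mu_1 x_p=d_p x_p-\sum_{v_k\in N_p}x_k,\qquad \mu_1 x_q=d_q x_q-\sum_{v_k\in N_q}x_k.
\end{equation*}
Subtracting these two identities, the contributions from the common neighbours $N_p\cap N_q$ cancel, leaving only sums over $N_q\setminus N_p$ and over $N_p\setminus N_q$. I would then isolate the single terms $x_p$ and $x_q$ from these remaining sums, noting that $v_p\in N_q$ and $v_q\in N_p$ precisely because $v_pv_q$ is an edge.

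To finish, I would estimate the surviving sums. Every entry satisfies $x_k\le x_p$ because $x_p$ is the global maximum, so the sum over $N_q\setminus N_p$ (after removing $v_p$) is at most $(d_q-|N_p\cap N_q|-1)x_p$; and every neighbour of $v_p$ satisfies $x_k\ge x_q$ by the choice of $v_q$, so the sum over $N_p\setminus N_q$ (after removing $v_q$) is at least $(d_p-|N_p\cap N_q|-1)x_q$. Collecting terms, the coefficients of $x_p$ and of $-x_q$ both reduce to $d_p+d_q-|N_p\cap N_q|=|N_p\cup N_q|$, yielding
\begin{equation*}
\mu_1\,(x_p-x_q)\le |N_p\cup N_q|\,(x_p-x_q).
\end{equation*}
As $x_p\ge x_q$, when $x_p>x_q$ I may divide to get $\mu_1\le|N_p\cup N_q|\le\max_{v_iv_j\in E(G)}|N_i\cup N_j|$; the degenerate case $x_p=x_q$ would force every neighbour of $v_p$ to equal $x_p$, whence the $p$-th equation gives $\mu_1 x_p=0$, and since $\mu_1>0$ this contradicts $x_p>0$. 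The final clause follows since $N_i\cup N_j\subseteq V(G)$ gives $|N_i\cup N_j|\le n$.

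I expect the bookkeeping in the subtraction step to be the main obstacle: one must verify that $v_p$ and $v_q$ are extracted from the correct sums, that $N_p\cap N_q$ (which excludes both $v_p$ and $v_q$) is exactly what cancels, and that the leftover coefficients collapse cleanly to $|N_p\cup N_q|$. Dispatching the boundary case $x_p=x_q$ via $\mu_1>0$ is the other point requiring care.
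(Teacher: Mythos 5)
Lemma~\ref{m2} is quoted in the paper from the reference \cite{DA0} and no proof is given in the text, so there is nothing internal to compare against; your argument has to stand on its own, and it does. The bookkeeping you flagged as the main risk works out: after subtracting the two eigenvalue equations the common neighbours cancel, $v_p$ and $v_q$ are correctly pulled out of $N_q\setminus N_p$ and $N_p\setminus N_q$ respectively (both sets are nonempty precisely because $v_pv_q\in E(G)$), the remaining sums have $d_q-|N_p\cap N_q|-1\geq 0$ and $d_p-|N_p\cap N_q|-1\geq 0$ terms, and the coefficients of $x_p$ and $-x_q$ each collapse to $d_p+d_q-|N_p\cap N_q|=|N_p\cup N_q|$. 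The degenerate case $x_p=x_q$ is also handled correctly: it forces $x_k=x_p$ for all $v_k\in N_p$, hence $\mu_1x_p=0$, contradicting $\mu_1>0$ (which holds since $G$ is connected on $n\geq 2$ vertices) and $x_p>0$. This is essentially the standard eigenvector argument used in the cited source, so your proposal is a correct and faithful reconstruction of the intended proof.
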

 In the following theorem we will give a complete solution of this problem for the minimal case.
 \begin{theorem} \label{NEWTH1} For any integer $2\leq p\leq \Big\lfloor\displaystyle{\frac{n}{2}}\Big\rfloor$ and any graph $G$ obtained by deleting $p$ edges from $K_n$, we have
 \begin{eqnarray}
  Kf(G)\geq n-1+\frac{2p}{n-2}\label{ps1}
 \end{eqnarray}
 with equality holding in {\em(\ref{ps1})} if and only if $G\cong K_n-p\,K_2$.
 \end{theorem}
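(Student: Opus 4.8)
The plan is to pass to the complement and reduce everything to the nonzero Laplacian eigenvalues of the deleted graph. Write $G = K_n - G_0$, so that, as noted in the text, $\overline{G} \cong G_0 \cup \overline{K_{n-|V(G_0)|}}$; denote this complement by $H$. If $\alpha_1 \ge \cdots \ge \alpha_k > 0$ are the nonzero Laplacian eigenvalues of $G_0$ (equivalently of $H$), then $H$ has the eigenvalue $0$ with multiplicity $n-k$, and Lemma \ref{NEW2} gives the spectrum of $G$ as $\{n - \alpha_1, \ldots, n - \alpha_k\} \cup \{n^{(n-1-k)}\} \cup \{0\}$. Lemma \ref{m2} applied to $G_0$, whose support has at most $2p \le n$ vertices, yields $\mu_1(G_0) = \alpha_1 < n$, so $G$ is indeed connected and every eigenvalue $n-\alpha_j$ is positive. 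Substituting into the Gutman--Mohar formula (\ref{e1}) and simplifying $\tfrac{n}{n-\alpha_j} = 1 + \tfrac{\alpha_j}{n-\alpha_j}$, I would obtain the clean identity
\[
Kf(G) = (n-1) + \sum_{j=1}^k \frac{\alpha_j}{n - \alpha_j}.
\]

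With this identity the problem becomes an extremal question about the multiset $\{\alpha_j\}$. Two structural facts drive it. First, since $G_0$ has $p$ edges, $\sum_{j=1}^k \alpha_j = 2p$ (trace of the Laplacian). Second, $k = \mathrm{rank}\,L(G_0)$ equals $n$ minus the number of components of $G_0$; as each nontrivial component on $s_i$ vertices has at least $s_i - 1$ edges, one gets $p \ge \sum (s_i - 1) = k$, with equality if and only if $G_0$ is a forest. For the candidate $pK_2$ one has $k = p$ and $\alpha_1 = \cdots = \alpha_p = 2$, giving $\sum \tfrac{\alpha_j}{n-\alpha_j} = \tfrac{2p}{n-2}$, which matches the claimed value; so it remains to prove $\sum_{j=1}^k \tfrac{\alpha_j}{n-\alpha_j} \ge \tfrac{2p}{n-2}$.

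The key step is convexity. The function $f(x) = \tfrac{x}{n-x}$ satisfies $f''(x) = \tfrac{2n}{(n-x)^3} > 0$ on $[0,n)$, so it lies above its tangent at $x = 2$: $f(x) \ge \tfrac{2}{n-2} + \tfrac{n}{(n-2)^2}(x - 2)$. Summing over $j$ and using $\sum \alpha_j = 2p$ gives $\sum_j f(\alpha_j) \ge \tfrac{2k}{n-2} + \tfrac{2n}{(n-2)^2}(p - k)$, and since this bound is decreasing in $k$ while $k \le p$, it is at least its value at $k = p$, namely $\tfrac{2p}{n-2}$. This establishes (\ref{ps1}). For the equality case I would track both inequalities: equality in the tangent-line bound forces every $\alpha_j = 2$ by strict convexity, and equality in $k \le p$ forces $G_0$ to be a forest; a forest all of whose nonzero Laplacian eigenvalues equal $2$ can only be a disjoint union of edges, since any tree on $\ge 3$ vertices already has largest Laplacian eigenvalue exceeding $2$. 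Hence equality holds precisely for $G_0 \cong pK_2$, i.e. $G \cong K_n - pK_2$.

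The main obstacle I anticipate is not the convexity inequality itself but packaging the two competing constraints --- the fixed edge sum $\sum \alpha_j = 2p$ and the variable count $k$ with $k \le p$ --- so that they jointly pin down the minimizer; the observation that the tangent-line bound is monotone in $k$ is what makes the two constraints cooperate rather than conflict.
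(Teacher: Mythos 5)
Your proposal is correct, and it follows the same basic reduction as the paper: pass to the complement, use Lemma \ref{NEW2} and the Gutman--Mohar formula (\ref{e1}), and exploit the trace identity $\sum_j\alpha_j=2p$. Where you diverge is in the finishing inequality and the bookkeeping of zero eigenvalues. The paper first shows $\overline{G}$ has at least $n-p$ components, so that \emph{all} nonzero complement eigenvalues sit among $\overline{\mu}_1,\dots,\overline{\mu}_p$, and then applies the AM--HM inequality to exactly those $p$ terms $\frac{n}{n-\overline{\mu}_i}$ (some possibly equal to $1$); equality in AM--HM immediately forces $\overline{\mu}_1=\cdots=\overline{\mu}_p=2$. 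You instead keep only the $k$ nonzero eigenvalues, bound $f(x)=\frac{x}{n-x}$ below by its tangent at $x=2$, and then use the rank inequality $k\le p$ together with the observation that the resulting bound is decreasing in $k$. Both are instances of convexity and both yield the same constant; your version makes the role of the forest condition in the equality analysis more explicit (equality forces every $\alpha_j=2$, hence $k=p$, hence $G_0$ a forest, hence $pK_2$), while the paper's padding-to-$p$-terms trick avoids having to discuss the variable count $k$ at all. One small point worth tightening: to get $\alpha_1<n$ strictly you should note that each component of $G_0$ has at most $p+1\le\lfloor n/2\rfloor+1<n$ vertices (a connected graph with at most $p$ edges), rather than just that the support of $G_0$ has at most $2p\le n$ vertices; this is exactly the paper's use of Lemma \ref{m2} to get $\overline{\mu}_1\le n-1$ for the disconnected graph $\overline{G}$.
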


 \begin{proof} Denote by $\overline{\mu}_i$ with $i=1,\,2,\ldots,\,n$ the non-increasing Laplacian eigenvalues of $\overline{G}$. By Lemma \ref{NEW2}, we have
 $\overline{\mu}_i=n-\mu_{n-i}$ for $i=1,\,2,\ldots,\,n-1$. Since $\overline{G}$ is the complement graph of $G$\,, we have $\overline{m}=p$ with
 $2\leq p\leq \left\lfloor\frac{n}{2}\right\rfloor$,  where $\overline{m}$ is the number of edges in $\overline{G}$. Since
            $$\overline{m}=p\leq \left\lfloor\frac{n}{2}\right\rfloor\,,$$
 $\overline{G}$ must be a disconnected graph. Let $k$ be the number of connected components in $\overline{G}$. Also let $\overline{n}_i$ and $\overline{m}_i$
 be the number of vertices and number of edges in the $i$-th component of $\overline{G}$ such that $\overline{n}_1\geq \overline{n}_2\geq \cdots\geq
 \overline{n}_{k-1}\geq \overline{n}_k$\,. Thus we have
        $$\sum\limits^k_{i=1}\overline{n}_i=n\,\,\,\mbox{ and  }\,\,\,\sum\limits^k_{i=1}\overline{m}_i=\overline{m}=p.$$

 From the above, it follows that
    $$p=\sum\limits^k_{i=1}\overline{m}_i\geq \sum\limits^k_{i=1}(\overline{n}_i-1)=n-k,\,\,\,\mbox{ that is, }\,\,k\geq n-p.$$

 \vspace*{3mm}

 Therefore there are at least $n-p$ Laplacian eigenvalues which are zero
 in $\overline{G}$\,, that is,
 \begin{equation}
 \overline{\mu}_i=0,\,\,i=p+1,\,p+2,\ldots,\,n.\label{1e0}
 \end{equation}

 Using the above, we get
 \begin{equation}
 \sum\limits^{n-1}_{i=1}\overline{\mu}_i=\sum\limits^{p}_{i=1}\overline{\mu}_i=2p.\label{1e1}
 \end{equation}

 \vspace*{3mm}

 Since $\overline{G}$ is disconnected, by Lemma \ref{m2}, we have
      $$\overline{\mu}_i\leq n-1,\,\,\,i=1,\,2,\ldots,\,n-1.$$

 Now we have
 \begin{eqnarray}
 Kf(G)&=&\sum\limits^{n-1}_{i=1}\frac{n}{\mu_i}\nonumber\\[2mm]
      &=&\sum\limits^{n-1}_{i=1}\frac{n}{n-\overline{\mu}_{n-i}}\,\,\,\mbox{ as $\mu_i=n-\overline{\mu}_{n-i}$}\nonumber\\[2mm]
      &=&n-1-p+\sum\limits^{p}_{i=1}\frac{n}{n-\overline{\mu}_i}\,\,\,\mbox{ by (\ref{1e0})}\nonumber\\[2mm]
      &\geq&n-1-p+\frac{p^2}{\sum\limits^{p}_{i=1}\displaystyle{\frac{n-\overline{\mu}_i}{n}}}\,\,\,\mbox{ by AM and HM inequality}\label{ps2}\\[2mm]
      &=&n-p-1+\frac{p}{1-2/n}\,\,\,\mbox{ as }\sum\limits^{p}_{i=1}\overline{\mu}_i=2p\nonumber\\[2mm]
      &=&n-1+\frac{2p}{n-2}\,.\nonumber
 \end{eqnarray}
 First part of the proof is done.

 \vspace*{3mm}

 Now suppose that the equality holds in (\ref{ps1}). Then all inequalities in the above argument must be equalities.
 From the equality in (\ref{ps2}), we get
    $$\frac{n}{n-\overline{\mu}_1}=\frac{n}{n-\overline{\mu}_2}=\cdots=\frac{n}{n-\overline{\mu}_p}\,,\,\,\,\mbox{ that is,
    }\,\,\overline{\mu}_1=\overline{\mu}_2=\cdots=\overline{\mu}_p\,.$$

 Using (\ref{1e1}), from the above, we get
      $$\overline{\mu}_1=\overline{\mu}_2=\cdots=\overline{\mu}_p=2.$$

 From the above, we conclude that each connected component $(n_i\geq 2)$ is isomorphic to
 $K_2$, otherwise, the largest Laplacian eigenvalue in $\overline{G}$ is $\overline{\mu}_1\geq 3$, a
 contradiction. Hence $\overline{G}\cong pK_2\cup (n-2p)K_1=pK_2\cup\overline{ K_{n-2p}}$\,,
 that is, $G\cong K_n-p\,K_2$\,.

 \vspace*{3mm}

 Conversely, let $G$ be isomorphic to the graph $K_n-p\,K_2$\,. Then the Laplacian spectrum of $G$ is
     $$S(G)=\{n^{(n-p-1)}\,,(n-2)^{(p)},\,0\}\,.$$
 Hence the equality holds in (\ref{ps1}).
 \end{proof}

 \begin{lemma} {\rm(\cite{Merris1994})} \label{dk7} Let $G$ be a simple graph on $n$ vertices which has at least one edge. Then
 \begin{equation}
 \mu_1\geq \Delta+1\,,\label{dlu8}
 \end{equation}
 where $\Delta$ is the maximum degree in $G$\,. Moreover, if $G$ is connected, then the
 equality holds in {\em(\ref{dlu8})} if and only if $\Delta=n-1$.
 \end{lemma}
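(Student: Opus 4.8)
The plan is to use the Rayleigh quotient characterization of the largest Laplacian eigenvalue together with a carefully chosen test vector. Since $L(G)$ is symmetric and positive semidefinite, $\mu_1=\max_{x\neq 0}\frac{x^{T}L(G)x}{x^{T}x}$, and the quadratic form admits the edge expansion $x^{T}L(G)x=\sum_{v_iv_j\in E(G)}(x_i-x_j)^2$. To obtain the lower bound I would fix a vertex $v$ of maximum degree $\Delta$ with neighbourhood $N(v)$, and test against the vector $x$ given by $x_v=\Delta$, $x_u=-1$ for every $u\in N(v)$, and $x_w=0$ otherwise. Here $\Delta\geq 1$ since $G$ has at least one edge, so the denominators below are positive.

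Next I would compute both pieces. One finds $x^{T}x=\Delta^2+\Delta=\Delta(\Delta+1)$, and splitting the edge sum into four types—(i) the $\Delta$ edges joining $v$ to $N(v)$, each contributing $(\Delta+1)^2$; (ii) edges inside $N(v)$, each contributing $0$; (iii) the $s\geq 0$ edges from $N(v)$ to $V(G)\setminus(\{v\}\cup N(v))$, each contributing $1$; and (iv) edges disjoint from $\{v\}\cup N(v)$, each contributing $0$—yields $x^{T}L(G)x=\Delta(\Delta+1)^2+s$. Therefore $\mu_1\geq\frac{x^{T}L(G)x}{x^{T}x}=(\Delta+1)+\frac{s}{\Delta(\Delta+1)}\geq\Delta+1$, which proves the inequality (\ref{dlu8}).

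For the equality statement I would argue in both directions. If $G$ is connected and $\mu_1=\Delta+1$, then $\Delta+1=\mu_1\geq(\Delta+1)+\frac{s}{\Delta(\Delta+1)}\geq\Delta+1$ forces $s=0$: no vertex of $N(v)$ has a neighbour outside $\{v\}\cup N(v)$, and neither does $v$. Thus $\{v\}\cup N(v)$ is a union of connected components of $G$, and connectedness forces $\{v\}\cup N(v)=V(G)$, so $n=\Delta+1$, i.e. $\Delta=n-1$. Conversely, if $\Delta=n-1$ then the bound just proved gives $\mu_1\geq n$, while the introduction records $\mu_1\leq n$; hence $\mu_1=n=\Delta+1$. (Alternatively, $\Delta=n-1$ makes $v$ isolated in $\overline{G}$, so $\overline{G}$ is disconnected and Lemma \ref{NEW2} gives $\mu_1=n$.)

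The only mildly delicate point is the equality analysis, and even there the work is light: the surplus term $\frac{s}{\Delta(\Delta+1)}$ in the Rayleigh quotient measures exactly the edges escaping the closed neighbourhood of $v$, so its vanishing is equivalent—under connectedness—to $\{v\}\cup N(v)$ exhausting $V(G)$. I expect no serious obstacle. The one subtlety to keep in mind is that the Rayleigh estimate only yields $s=0$ (not that $x$ is itself a $\mu_1$-eigenvector), but $s=0$ already suffices for the structural conclusion, so this causes no difficulty.
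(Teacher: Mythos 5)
Your proof is correct. Note that the paper does not prove this lemma at all: it is quoted from Merris's survey \cite{Merris1994} as a known result, so there is no in-paper argument to compare against. Your Rayleigh-quotient computation with the test vector $x_v=\Delta$, $x_u=-1$ on $N(v)$, $x_w=0$ elsewhere is a clean, self-contained derivation: the quadratic form evaluates to $\Delta(\Delta+1)^2+s$ against $x^Tx=\Delta(\Delta+1)$, giving $\mu_1\geq(\Delta+1)+\frac{s}{\Delta(\Delta+1)}$, and you correctly observe that equality only yields $s=0$ (not that $x$ is an eigenvector), which under connectedness forces $\{v\}\cup N(v)=V(G)$ and hence $\Delta=n-1$. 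The converse via $\mu_1\leq n$ (or via Lemma \ref{NEW2} applied to the isolated vertex of $\overline{G}$) is also sound. For comparison, the more common textbook route is to note that $K_{1,\Delta}$ together with $n-\Delta-1$ isolated vertices is a spanning subgraph of $G$, so repeated application of the edge-insertion interlacing in Lemma \ref{NEW0} gives $\mu_1(G)\geq\mu_1(K_{1,\Delta})=\Delta+1$ directly; that version hides the surplus term $s$, whereas your explicit Rayleigh quotient makes the equality analysis transparent without needing any interlacing machinery. Either way, no gap.
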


 Let $a_1,\,a_2,\ldots,\,a_n$ be positive real numbers. We define $A_k$ to be the average of all products of $k$ of the $a_i$'s, that
 is,
 \begin{eqnarray}
 A_1&=&\frac{a_1+a_2+\cdots+a_n}{n}\nonumber\\
    &&\nonumber\\
 A_2&=&\frac{a_1a_2+a_1a_3+\cdots+a_1a_n+a_2a_3+\cdots+a_{n-1}a_n}{\frac{1}{2}\,n(n-1)}\nonumber\\
 \vdots\nonumber\\
 A_{n-1}&=&\frac{a_2\ldots a_{n-1}\,a_n+a_1a_3\ldots a_{n-1}\,a_n+\cdots+a_1a_2\ldots a_{n-2}\,a_n+a_1a_2\ldots a_{n-1}}{n}\nonumber\\
 A_n&=&a_1a_2\ldots a_n\,.\nonumber
 \end{eqnarray}

 Hence the AM is simply $A_1$ and the GM is $A_n^{1/n}$\,. The
following result generalize this:

 \begin{lemma} {\rm (Maclaurin's Symmetric Mean Inequality \cite{BW})} \label{1m2}
 For positive real numbers $a_1,~a_2,\ldots,~a_n$,
    $$A_1\geq A_2^{1/2}\geq A_3^{1/3}\geq \ldots \geq A_{n-1}^{1/(n-1)}\geq A_n^{1/n}.$$
 Equality holds if and only if $a_1=a_2=\cdots=a_n$.
 \end{lemma}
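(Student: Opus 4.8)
The plan is to reduce the whole chain to a single family of inequalities --- the classical \emph{Newton inequalities} --- and then to splice them together by a geometric-mean argument. Write $e_k=\sum_{i_1<\cdots<i_k}a_{i_1}\cdots a_{i_k}$ for the $k$-th elementary symmetric function of the $a_i$, so that $A_k=e_k/\binom{n}{k}$; since every $a_i>0$ we have $A_k>0$ for all $k$, and I set $A_0=1$. The core step, which I expect to be the main obstacle, is to prove
\begin{equation}
A_k^2\ge A_{k-1}\,A_{k+1},\qquad 1\le k\le n-1.\label{newtonineq}
\end{equation}

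Granting \eqref{newtonineq}, the inequality chain drops out at once. Set $r_k=A_k/A_{k-1}>0$ for $1\le k\le n$; then \eqref{newtonineq} is precisely the statement $r_k\ge r_{k+1}$, so that $r_1\ge r_2\ge\cdots\ge r_n$. Because $A_k=r_1r_2\cdots r_k$, the number $A_k^{1/k}$ is exactly the geometric mean of $r_1,\ldots,r_k$. The extra factor $r_{k+1}$ is no larger than each of $r_1,\ldots,r_k$, hence no larger than their geometric mean $A_k^{1/k}$; since $A_{k+1}^{1/(k+1)}=\bigl(A_k^{1/k}\bigr)^{k/(k+1)}r_{k+1}^{1/(k+1)}$, it follows immediately that $A_k^{1/k}\ge A_{k+1}^{1/(k+1)}$. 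Letting $k$ run from $1$ to $n-1$ yields the full chain.

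To establish \eqref{newtonineq} I would exploit the real-rootedness of $P(x)=\prod_{i=1}^n(x-a_i)=\sum_{j=0}^n(-1)^j\binom{n}{j}A_j\,x^{n-j}$, whose roots $a_1,\ldots,a_n$ are all real (indeed positive). By Rolle's theorem each derivative of $P$ again has only real roots, and the same persists after reversing the coefficient list, which replaces the roots by their reciprocals --- all finite and nonzero since the $a_i>0$. To isolate the three coefficients carrying $A_{k-1},A_k,A_{k+1}$ I would differentiate $P$ exactly $n-k-1$ times (obtaining a real-rooted polynomial of degree $k+1$), reverse it, and differentiate a further $k-1$ times, leaving a quadratic whose coefficients are proportional to $A_{k-1},A_k,A_{k+1}$. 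A quadratic with real roots has nonnegative discriminant, and after cancelling the accumulated factorial and binomial factors this discriminant condition is exactly \eqref{newtonineq}. The delicate part here is the bookkeeping of those factorial/binomial constants produced by the two rounds of differentiation and the reversal; this is where I expect the argument to be most error-prone.

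Finally, the equality case is clean and does not require tracking equality through \eqref{newtonineq}. If $a_1=\cdots=a_n=a$ then $A_k=a^k$, so $A_k^{1/k}=a$ for every $k$ and the whole chain collapses to equalities. Conversely, if equality holds throughout then in particular $A_1=A_n^{1/n}$; but $A_1=\tfrac1n\sum_i a_i$ is the arithmetic mean and $A_n^{1/n}=(a_1\cdots a_n)^{1/n}$ is the geometric mean of $a_1,\ldots,a_n$, so the ordinary AM--GM inequality forces $a_1=\cdots=a_n$.
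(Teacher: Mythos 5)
The paper offers no proof of this lemma; it is quoted as a known result from the cited reference \cite{BW}, so there is no ``paper's route'' to compare against. Your argument is the standard textbook derivation of Maclaurin's inequality from Newton's inequalities $A_k^2\ge A_{k-1}A_{k+1}$, and the part you carry out in full --- passing from Newton's inequalities to the monotonicity of $A_k^{1/k}$ via the ratios $r_k=A_k/A_{k-1}$ and the observation that $A_k^{1/k}$ is the geometric mean of $r_1,\dots,r_k$ --- is correct and complete. The Newton step itself is only outlined: the differentiate--reverse--differentiate scheme applied to $\prod_{i}(x-a_i)$ is the right one, repeated differentiation keeps all roots real and positive (so the reversal is legitimate at every stage), and the factorial bookkeeping does come out to exactly $A_k^2\ge A_{k-1}A_{k+1}$ under the normalization $A_k=e_k/\binom{n}{k}$; you have flagged rather than executed this computation, but nothing in it fails.

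The one substantive caveat concerns the equality clause. Your ``only if'' direction uses equality in the \emph{entire} chain to reach $A_1=A_n^{1/n}$ and then invokes AM--GM. That proves the lemma as literally displayed, but it is weaker than what the paper actually needs: in the proof of Theorem~\ref{Weak} the authors deduce $\overline{\mu}_2=\cdots=\overline{\mu}_p$ from equality in the single inequality $A_1\ge A_{p-2}^{1/(p-2)}$, which collapses only the links from $A_1$ down to the second-to-last term and says nothing about $A_n^{1/n}$. To cover that use, one must show that equality in any one consecutive link already forces $a_1=\cdots=a_n$; in your framework this means tracking the equality case of Newton's inequality (equality there makes the final quadratic have a double root, and pushing this back through the construction forces all the $a_i$ to coincide). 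This is standard but is genuinely an extra step beyond your AM--GM shortcut, and you should either add it or state explicitly that your equality claim refers to simultaneous equality throughout the chain.
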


 \begin{theorem}\label{Weak} For any integer $2\leq p\leq \lfloor\frac{n}{2}\rfloor$ and any graph $G$ obtained by deleting $p$ edges from $K_n$, we have
 \begin{eqnarray}
 Kf(G)\leq n-1-p+\frac{n}{n-p-1}+\frac{(p-1)\,\delta\,n^{n-p-1}\,(n-1)^{p-2}}{t(G)}\,,\label{e1ps1}
 \end{eqnarray}
 where $t(G)$ is the number of spanning trees in $G$ and $\delta$ is the minimum degree in $G$. Moreover, the equality holds in $(\ref{e1ps1})$
 if and only if $G\cong K_n-K_{1,\,p}$.
 \end{theorem}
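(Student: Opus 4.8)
The plan is to pass to the complement, just as in the proof of Theorem~\ref{NEWTH1}. Let $\overline\mu_1\ge\cdots\ge\overline\mu_n=0$ be the Laplacian eigenvalues of $\overline G$ and set $z_i:=n-\overline\mu_i$ for $i=1,\dots,n-1$. Since $\overline G$ has $\overline m=p\le\lfloor n/2\rfloor$ edges it carries at least $n-p$ zero Laplacian eigenvalues, so $\overline\mu_{p+1}=\cdots=\overline\mu_{n-1}=0$, i.e. $z_i=n$ for $p<i\le n-1$. By Lemma~\ref{NEW2} and the relation (\ref{e1}),
\[
 Kf(G)=\sum_{i=1}^{n-1}\frac{n}{z_i}=(n-1-p)+\sum_{i=1}^{p}\frac{n}{z_i},
 \qquad \sum_{i=1}^{p}z_i=p(n-2),
\]
the last identity because $\sum_i\overline\mu_i=2p$. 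Moreover the nonzero Laplacian eigenvalues of $G$ are exactly $z_1,\dots,z_{n-1}$, so the Matrix--Tree theorem gives $n\,t(G)=\prod_{i=1}^{n-1}z_i=n^{\,n-1-p}\prod_{i=1}^{p}z_i$, whence $n^{\,n-p-1}/t(G)=n/\prod_{i=1}^p z_i$. Substituting this and cancelling a factor $n$, the claimed bound is equivalent to
\[
 \frac1{z_1}+\sum_{i=2}^{p}\frac1{z_i}\ \le\ \frac1{n-p-1}+\frac{(p-1)\,\delta\,(n-1)^{p-2}}{\prod_{i=1}^{p}z_i}.
\]

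Both terms on the right will be produced from the smallest value $z_1=n-\overline\mu_1$, which I bound in two ways. For any edge $v_iv_j$ of $\overline G$ one has $|N_i\cup N_j|\le d_i+d_j\le p+1$ (the edges meeting $\{v_i,v_j\}$ number at most $p$), so Lemma~\ref{m2} gives $\overline\mu_1\le p+1$, that is $z_1\ge n-p-1$ and $n/z_1\le n/(n-p-1)$. In the opposite direction Lemma~\ref{dk7} applied to $\overline G$ gives $\overline\mu_1\ge\overline\Delta+1$, where $\overline\Delta=n-1-\delta$ is the maximum degree of $\overline G$; hence $z_1\le\delta$. Finally $\overline\Delta\le p$ gives $\delta\ge n-1-p$, a bound I will use below.

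It remains to control $\sum_{i=2}^{p}1/z_i$; after multiplying through by $\prod_i z_i=z_1\prod_{i\ge2}z_i$ this is equivalent to
\[
 z_1\,e_{p-2}(z_2,\dots,z_p)\ \le\ (p-1)\,\delta\,(n-1)^{p-2},
\]
where $e_{p-2}$ is the $(p-2)$-th elementary symmetric polynomial of the $p-1$ numbers $z_2,\dots,z_p$ (for $p=2$ the left side is just $z_1$, and $z_1\le\delta$ settles it). Maclaurin's inequality (Lemma~\ref{1m2}), in the form $A_{p-2}\le A_1^{\,p-2}$ applied to $z_2,\dots,z_p$, gives
\[
 e_{p-2}(z_2,\dots,z_p)\ \le\ (p-1)\Big(\frac{z_2+\cdots+z_p}{p-1}\Big)^{p-2}=(p-1)\Big(\frac{p(n-2)-z_1}{p-1}\Big)^{p-2},
\]
so it suffices to prove $g(z_1)\le\delta(n-1)^{p-2}$ for $g(x):=x\big((p(n-2)-x)/(p-1)\big)^{p-2}$. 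A short derivative computation shows $g$ is increasing for $x<p(n-2)/(p-1)$, an interval containing $[\,n-p-1,\delta\,]$ since $\delta\le n-2$; as $z_1\le\delta$ this yields $g(z_1)\le g(\delta)$, and $g(\delta)\le\delta(n-1)^{p-2}$ reduces, after taking $(p-2)$-th roots, exactly to $\delta\ge n-1-p$, which holds. Combining the two parts proves the inequality.

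For equality both chains must be tight. Tightness of $n/z_1\le n/(n-p-1)$ forces $\overline\mu_1=p+1$; tightness of the second part forces $z_1=\delta=n-1-p$, equality in Maclaurin (so $z_2=\cdots=z_p$), and hence, since $\sum_{i\ge2}\overline\mu_i=2p-(p+1)=p-1$, $\overline\mu_2=\cdots=\overline\mu_p=1$. Thus the Laplacian spectrum of $\overline G$ is $\{p+1,\,1^{(p-1)},\,0^{(n-p)}\}$; such a graph has $p$ edges and $n-p$ components, hence is a forest, and as a tree on $q$ vertices has largest Laplacian eigenvalue at most $q$ with equality only for a star, the eigenvalue $p+1$ forces a $K_{1,p}$ component, so $\overline G=K_{1,p}\cup\overline{K_{n-p-1}}$ and $G\cong K_n-K_{1,p}$. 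The step I expect to be hardest is precisely this bound on $\sum_{i\ge2}1/z_i$: because the right-hand side is self-referential through $t(G)$, one cannot estimate the $z_i$ one by one (indeed $z_i>n-1$ may occur when a component of $\overline G$ has algebraic connectivity below $1$), so the passage to the symmetric-function inequality is unavoidable, and its sharpness is rescued only by the monotonicity of $g$ together with the exact degree bound $\delta\ge n-1-p$, which makes the crude Maclaurin estimate close with the correct constant.
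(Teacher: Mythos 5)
Your proposal is correct and follows essentially the same route as the paper's proof: pass to the complement, use $\overline{\Delta}+1\le\overline{\mu}_1\le p+1$ (Lemmas \ref{dk7} and \ref{m2}), the matrix--tree identity, and Maclaurin's inequality on $z_2,\dots,z_p$, with the equality case pinned down by the forced spectrum $\{p+1,1^{(p-1)},0^{(n-p)}\}$. The only (cosmetic) difference is that you package the final estimate as monotonicity of $g(x)=x\bigl(\tfrac{p(n-2)-x}{p-1}\bigr)^{p-2}$ on $[n-p-1,\delta]$, whereas the paper bounds the two factors $z_1\le\delta$ and $A_1\le n-1$ separately.
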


 \begin{proof} For the sake of consistency,  $\overline{\mu}_i$ with $i=1,\,2,\ldots,\,n$, $\overline{m}$, $\overline{m}_i$ and $\overline{n}_i$ are similarly defined  as that
 in the proof of Theorem \ref{NEWTH1}. Then we claim that $\overline{G}$ has exactly $n-p$ components of order $n$ and with $p$ edges. It follows that
 \begin{equation}
 \overline{\mu}_i=0,\,\,i=p+1,\,p+2,\ldots,\,n,~\mbox{ that is, }~\mu_i=n,~i=1,\,2,\ldots,\,n-p-1.\label{1ew11}
 \end{equation}

 Moreover, we have
 \begin{equation}
 \sum\limits^p_{i=1}\,\overline{\mu}_i=2p. \label{dase1}
 \end{equation}

 Now we assume that $\overline{G}=\bigcup\limits_{i=1}^{n-p}H_{i}$ and $\overline{\Delta}$ is the maximum degree in $\overline{G}$. Then, by Lemmas \ref{m2} and \ref{dk7}, we have
 \begin{eqnarray}
 \overline{\Delta}+1\leq \overline{\mu}_1=\max\limits_{1\leq i\leq n-p}\mu_1(H_i)\leq p+1.\label{1ew1}
 \end{eqnarray}

 Putting $n=p-1$ and $a_i=n-\overline{\mu}_{i+1}$, $i=1,\,2,\ldots,\,p-1$ in Lemma \ref{1m2}, we get $A_1\geq A_{p-2}^{1/(p-2)}$\,, that is,
 \begin{equation}
 \frac{\sum\limits^p_{i=2}\,(n-\overline{\mu}_i)}{p-1}\geq \left[\frac{\prod^p_{i=2}\,(n-\overline{\mu}_i)\,\sum\limits^{p}_{i=2}
 \frac{1}{n-\overline{\mu}_i}}{p-1}\right]^{1/(p-2)}\,.\label{kcd1}
 \end{equation}

 It is well known that
     $$t(G)=\frac{1}{n}\,\prod^{n-1}_{i=1}\,\mu_i\,.$$

 \vspace*{3mm}

 Since $n-\mu_{n-1}=\overline{\mu}_1\geq \overline{\Delta}+1$ and $n-\overline{\Delta}-1=\delta$, we have
  $$\prod^p_{i=2}(n-\overline{\mu}_i)=\prod^p_{i=2}\,\mu_{n-i}=\frac{\prod^{n-1}_{i=1}\,\mu_{i}}{\prod^{n-p-1}_{i=1}\,\mu_{i}\cdot \mu_{n-1}}\geq
  \frac{n\,t(G)}{n^{n-p-1}\,\delta}$$
 and
   $$\frac{\sum\limits^p_{i=2}\,(n-\overline{\mu}_i)}{p-1}=\frac{n(p-1)-(2p-\overline{\mu}_1)}{p-1}\leq
   n-1~~\mbox{ as }\overline{\mu}_1\leq p+1.$$

 \vspace*{3mm}

 Using the above result in (\ref{kcd1}), we get
 \begin{eqnarray}
 \sum\limits^{p}_{i=2} \frac{1}{n-\overline{\mu}_i}&\leq& \frac{(p-1)}{\prod^p_{i=2}\,(n-\overline{\mu}_i)}\,(n-1)^{p-2}\nonumber\\[3mm]
     &\leq&\frac{(p-1)\,(n-1)^{p-2}\,\delta\,n^{n-p-1}}{n\,t(G)}\,.\label{kcd2}
 \end{eqnarray}

 Therefore, we have
 \begin{eqnarray}
 Kf(G)&=&\sum\limits^{n-1}_{i=1}\frac{n}{\mu_i}\nonumber\\[2mm]
      &=&\sum\limits^{n-1}_{i=1}\frac{n}{n-\overline{\mu}_{n-i}}\,\,\,\mbox{ as $\mu_i=n-\overline{\mu}_{n-i}$}\nonumber\\[2mm]
      &=&n-1-p+\sum\limits^{p}_{i=1}\frac{n}{n-\overline{\mu}_i}\,\,\,\mbox{ by (\ref{1ew11})}\nonumber\\
      &\leq&n-1-p+\frac{n}{n-p-1}+\sum\limits^{p}_{i=2}\frac{n}{n-\overline{\mu}_i}~~~\mbox{ by (\ref{1ew1})}\,.\nonumber
 \end{eqnarray}

 Using (\ref{kcd2}) in the above, we get the required result in (\ref{e1ps1}). First part of the proof is done.

 \vspace*{3mm}

 Now suppose that the equality holds in (\ref{e1ps1}). Then all inequalities in the above argument must be equalities.
 From the equality in (\ref{kcd1}), we get $\overline{\mu}_2=\overline{\mu}_3=\cdots=\overline{\mu}_p$, by Lemma \ref{1m2}.

 \vspace*{3mm}

 From the equality in (\ref{kcd2}), we get $\overline{\mu}_1=p+1$. Using (\ref{dase1}) with the above results, we get $\overline{\mu}_2=\overline{\mu}_3=\cdots=\overline{\mu}_p=1$.
 Thus we must have $\overline{G}$ is tree $K_{1,\,p}$ and all the remaining $n-p-1$ components are trivially $K_1$'s. Equivalently, we deduce that $G=K_n-K_{1,\,p}$.

 \vspace*{3mm}

 Conversely, let $G\cong K_n-K_{1,\,p}$\,. Then we have $\mu_1=\mu_2=\cdots=\mu_{n-p-1}=n$, $\mu_{n-p}=\mu_{n-p+1}=\cdots=\mu_{n-2}=n-1$ and
 $\mu_{n-1}=n-p-1$. Also we have $t(G)=(n-p-1)\,n^{n-p-2}\,(n-1)^{p-1}$ and $\delta=n-p-1$. Now,
 \begin{eqnarray}
 &&n-1-p+\frac{n}{n-p-1}+\frac{(p-1)\,\delta\,n^{n-p-1}\,(n-1)^{p-2}}{t(G)}\nonumber\\[3mm]
 &&~~~~~~~~~~~~~~~~~~~~~~~~~~~~~~~~~~~~~~~~~=n-p-1+\frac{n}{n-1}\,(p-1)+\frac{n}{n-p-1}\nonumber\\[2mm]
 &&~~~~~~~~~~~~~~~~~~~~~~~~~~~~~~~~~~~~~~~~~=Kf(K_n-K_{1,p})\,.\nonumber
 \end{eqnarray}
 This completes the proof.
 \end{proof}

 The following lemma was implicitly proved in \cite{Kel1974}.
\begin{lemma}{\em(\cite{Kel1974})} \label{NADD1} Let $G$ be a connected graph obtained by deleting $p\leq n-1$ edges from the complete graph $K_n$. Then we have \begin{eqnarray}
 t(G)\geq n^{n-p-2}(n-1)^{p-1}(n-p-1)\,,\label{eADD1}
 \end{eqnarray}
 with equality holding if and only if $G\cong K_n-K_{1,p}$. \end{lemma}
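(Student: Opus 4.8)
The plan is to pass to the complement and reduce the statement to a spectral extremal problem. Write $H=\overline{G}$, so $H$ has $p$ edges on $n$ vertices and, since $p\le n-1$, it has at least $n-p$ connected components and hence at least $n-p$ zero Laplacian eigenvalues; ordering these as $\overline{\mu}_1\ge\cdots\ge\overline{\mu}_n=0$ we get $\overline{\mu}_{p+1}=\cdots=\overline{\mu}_n=0$ and $\sum_{j=1}^p\overline{\mu}_j=2p$. Feeding the complement relation $\mu_i(G)=n-\overline{\mu}_{n-i}$ from Lemma \ref{NEW2} into $t(G)=\frac1n\prod_{i=1}^{n-1}\mu_i$, the $n-p-1$ eigenvalues equal to $n$ factor out and I obtain
$$t(G)=n^{\,n-p-2}\prod_{j=1}^p\bigl(n-\overline{\mu}_j\bigr).$$
Thus \eqref{eADD1} is equivalent to $\prod_{j=1}^p(n-\overline{\mu}_j)\ge (n-1)^{p-1}(n-p-1)$, and the spectrum of $K_{1,p}\cup\overline{K_{n-p-1}}$, namely $\{p+1,\,1^{(p-1)},\,0^{(n-p)}\}$, realizes equality. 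So the whole problem becomes: minimize $\prod_{j=1}^p(n-\overline{\mu}_j)$ subject to $\sum_{j=1}^p\overline{\mu}_j=2p$ over Laplacian spectra of $p$-edge graphs.

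The natural first move parallels Theorem \ref{Weak}: the factors $n-\overline{\mu}_j$ are positive with fixed sum $p(n-2)$, so by concavity of $x\mapsto\log(n-x)$ (equivalently by Maclaurin's inequality, Lemma \ref{1m2}) the product is extremal at the most spread out spectrum, which suggests pushing $\overline{\mu}_1$ to its maximum. Here Lemma \ref{m2} gives $\overline{\mu}_1\le \max_{v_iv_j\in E(H)}|N_i\cup N_j|\le\max_{v_iv_j\in E(H)}(d_i+d_j)\le p+1$, the last step because the $d_i+d_j-1$ edges meeting the edge $v_iv_j$ are distinct edges of $H$. The star attains this bound: there $\Delta(H)=p$, so Lemma \ref{dk7} yields the matching lower bound $\overline{\mu}_1\ge\Delta+1=p+1$, and its spectrum $(p+1,1^{(p-1)})$ gives exactly the target value $(n-p-1)(n-1)^{p-1}$. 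This identifies the conjectured minimizer.

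The difficulty is that the constraints $\sum_{j=1}^p\overline{\mu}_j=2p$ and $\overline{\mu}_1\le p+1$ do not by themselves single out the star: for $p\ge 3$ the vector $(p+1,\,p-1,\,0,\ldots,0)$ satisfies both yet gives the strictly smaller product $(n-p-1)(n-p+1)\,n^{\,p-2}$, and it is simply not the spectrum of any $p$-edge graph. Hence the relaxed optimization is too weak and one must use that the $\overline{\mu}_j$ are genuine Laplacian eigenvalues. The clean device is the Kelmans--Chelnokov expansion of the Laplacian characteristic polynomial: since a computation as above gives $t(G)=Q(H,n)/n^2$, one has
$$n^2\,t(G)=Q(H,n)=\sum_{k=0}^{p}(-1)^k c_k(H)\,n^{\,n-k},\qquad c_k(H)=\sum_{F}\ \prod_{C}\,|V(C)|,$$
where the inner sum runs over all spanning forests $F$ of $H$ with exactly $k$ edges and over the components $C$ of $F$. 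The plan is then to compare $t(K_n-H)$ for a general $H$ with that of the star through a shifting (Kelmans) transformation that concentrates the edges of $H$ onto a single vertex: I would show each such step does not decrease $t(K_n-H)$ and that $K_{1,p}$ is the unique graph stable under all such moves, whence it is the strict minimizer.

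The main obstacle is precisely this monotonicity step. Because the coefficients $c_k$ enter $Q(H,n)$ with alternating signs and, as the small computations suggest, a transformation toward the star lowers \emph{every} $c_k$ (merging components shrinks the weights $\prod_C|V(C)|$), the desired inequality is a genuinely signed comparison of forest counts rather than a term-by-term domination: a smaller $c_k$ helps for even $k$ but hurts for odd $k$. Controlling the full alternating sum—equivalently, proving that the signed forest generating function $\sum_{F}(-n)^{-|F|}\prod_C|V(C)|$ decreases under the Kelmans shift—is the crux, and is the part left implicit in \cite{Kel1974}. Once this monotonicity and the uniqueness of the stable graph are established, the equality case $G\cong K_n-K_{1,p}$ drops out, completing the proof.
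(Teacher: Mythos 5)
The paper itself offers no proof of Lemma \ref{NADD1}: it is quoted from \cite{Kel1974} with the remark that it was only ``implicitly proved'' there, so there is no argument in the text to compare yours against line by line. Judged on its own terms, the first half of your proposal is correct and genuinely informative. The identity $t(G)=n^{\,n-p-2}\prod_{j=1}^{p}\bigl(n-\overline{\mu}_j\bigr)$ follows exactly as you say from Lemma \ref{NEW2} and the matrix--tree formula, the bound $\overline{\mu}_1\le p+1$ is a correct consequence of Lemma \ref{m2}, and your counterexample vector $(p+1,\,p-1,\,0,\ldots,0)$ is a real and valuable observation: it shows that the constraints $\sum_{j}\overline{\mu}_j=2p$ and $\overline{\mu}_1\le p+1$ alone cannot force the bound, so no argument in the style of Theorems \ref{NEWTH1} and \ref{Weak} (AM--HM, Maclaurin) can possibly close this lemma. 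That diagnosis is worth having on record.

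Nevertheless the proof is not complete, and you say so yourself. Everything rests on the claim that a Kelmans shift applied to $H=\overline{G}$ does not decrease $t(K_n-H)=Q(H,n)/n^2$, together with the claim that $K_{1,p}$ is the unique graph stable under all such shifts; neither is established. This is not a routine verification: as you note, the shift decreases every forest coefficient $c_k(H)$, and since these enter $Q(H,n)=\sum_{k}(-1)^k c_k(H)\,n^{\,n-k}$ with alternating signs, termwise comparison gives nothing. One genuinely needs a signed inequality between the two forest generating functions --- equivalently, a comparison of the characteristic polynomials $Q(H,x)$ and $Q(H',x)$ valid at the point $x=n$, which lies at or above every Laplacian eigenvalue of $H$ --- and that is precisely the content of Kelmans' theorem, not a corollary of anything you have written down. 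Until that monotonicity and the uniqueness of the fixed point are proved, the equality characterization $G\cong K_n-K_{1,p}$ is also unproved. As it stands the proposal is an accurate road map to \cite{Kel1974} rather than a proof; to make it self-contained you must either prove the monotonicity of $Q(\cdot\,,x)$ for $x\ge n$ under the shift, or replace the local-move strategy by a direct global comparison of $Q(H,n)$ with $Q\bigl(K_{1,p}\cup\overline{K_{n-p-1}},\,n\bigr)$ that uses the full hypothesis that $H$ has exactly $p$ edges.
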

 Combining Lemma \ref{NADD1} and Theorem \ref{Weak}, we can easily deduce the following corollary.

 \begin{corollary} \label{NCO1} For any integer $2\leq p\leq \lfloor\frac{n}{2}\rfloor$ and any graph $G$ obtained by deleting $p$ edges from $K_n$, we have
 \begin{eqnarray}
 Kf(G)\leq n-1-p+\frac{n}{n-p-1}+\frac{n\,(p-1)\,\delta}{(n-1)(n-p-1)}\,,\label{e1ps22}
 \end{eqnarray}
 where $\delta$ is the minimum degree in $G$. Moreover, the equality holds in $(\ref{e1ps1})$
 if and only if $G\cong K_n-K_{1,\,p}$. \end{corollary}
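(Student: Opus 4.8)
The plan is to compose the two sharp bounds already established: Theorem~\ref{Weak} bounds $Kf(G)$ in terms of the spanning tree count $t(G)$, and Lemma~\ref{NADD1} supplies a matching sharp lower bound on $t(G)$. Since $t(G)$ occurs only in the denominator of a single term of the Theorem~\ref{Weak} bound, replacing it by a lower bound immediately yields an explicit upper bound on $Kf(G)$.

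Concretely, I would start from
$$Kf(G)\leq n-1-p+\frac{n}{n-p-1}+\frac{(p-1)\,\delta\,n^{n-p-1}\,(n-1)^{p-2}}{t(G)}$$
of Theorem~\ref{Weak}. The coefficient multiplying $1/t(G)$ is strictly positive: $G$ is connected so $\delta\geq 1$, we have $p-1\geq 1$ because $p\geq 2$, and both exponents $n-p-1$ and $p-2$ are nonnegative since $2\leq p\leq\lfloor n/2\rfloor$. Hence the term is decreasing in $t(G)$, and I may substitute the lower bound $t(G)\geq n^{n-p-2}(n-1)^{p-1}(n-p-1)$ from Lemma~\ref{NADD1} (applicable since $p\leq\lfloor n/2\rfloor\leq n-1$). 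A short simplification using $n^{n-p-1}/n^{n-p-2}=n$ and $(n-1)^{p-2}/(n-1)^{p-1}=1/(n-1)$ turns the last term into $\dfrac{n(p-1)\delta}{(n-1)(n-p-1)}$, which gives exactly the asserted inequality.

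For the equality discussion, observe that equality in the resulting chain forces equality simultaneously in Theorem~\ref{Weak} and in Lemma~\ref{NADD1}. Both of these are attained precisely when $G\cong K_n-K_{1,p}$, so the two extremal conditions coincide and the combined equality holds if and only if $G\cong K_n-K_{1,p}$; the converse is verified directly from the Laplacian spectrum and tree count of $K_n-K_{1,p}$ already computed in the proof of Theorem~\ref{Weak}. I do not anticipate any genuine obstacle here---the argument is a clean composition of two pre-existing sharp estimates---so the only points deserving attention are confirming the positivity of the coefficient of $1/t(G)$ (which licenses substituting the lower bound) and checking the elementary arithmetic of the exponents.
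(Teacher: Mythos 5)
Your proposal is correct and is exactly the paper's argument: the authors state that the corollary follows by ``combining Lemma~\ref{NADD1} and Theorem~\ref{Weak},'' i.e.\ by substituting the sharp lower bound on $t(G)$ into the last term of the bound in Theorem~\ref{Weak}, with the equality case forced by the common extremal graph $K_n-K_{1,p}$. Your additional checks (positivity of the coefficient of $1/t(G)$ and the exponent arithmetic) are sound.
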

\medskip
\subsection{The ordering of connected graphs with larger Kirchoff indices}
\ \indent In this subsection we will determine the graphs from ${\cal{G}}(n)$ ($n>27$) with first to ninth largest Kirchhoff indices.  Considering Lemma \ref{NEW1} $(1)$ and Corollary \ref{CO1}, we find that the path $P_n$ has the largest Kirchhoff index among all graphs from ${\cal{G}}(n)$.
Before stating our main result, we first prove a lemma below.
\begin{lemma} \label{LEM3.1} For any connected graph $G$ of order $n$ and with $m>n+1$ edges, there exists a connected graph $G_1$ of order $n$ and with $n+1$ edges such that $Kf(G_1)>Kf(G)$.
\end{lemma}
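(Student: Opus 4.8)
The plan is to reduce $G$ to a graph with exactly $n+1$ edges by deleting edges one at a time, never destroying connectivity, and to invoke Lemma \ref{NEW1}$(1)$ at each step so that the Kirchhoff index strictly increases throughout. The only structural fact I need is elementary: a connected graph on $n$ vertices with at least $n$ edges must contain a cycle (a connected acyclic graph is a tree and has exactly $n-1$ edges), and any edge lying on a cycle is not a bridge, so removing it leaves the graph connected.

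Concretely, I would argue by a finite descent. Write $H_m = G$, which is connected with $m$ edges, and note that $m > n+1$ forces $m \geq n+2$. Suppose inductively that for some $k$ with $n+2 \leq k \leq m$ we have produced a connected graph $H_k$ of order $n$ with exactly $k$ edges. Since $k \geq n+2 > n-1$, the graph $H_k$ contains a cycle; choose an edge $e_k$ on such a cycle. Then $e_k$ is not a bridge, so $H_{k-1} := H_k - e_k$ is again connected, now with $k-1$ edges, and Lemma \ref{NEW1}$(1)$ gives $Kf(H_{k-1}) > Kf(H_k)$. Iterating from $k = m$ down to $k = n+2$ yields a connected graph $H_{n+1}$ of order $n$ with exactly $n+1$ edges. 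Because $m \geq n+2$, at least one deletion step is actually performed, and chaining the strict inequalities produces
$$Kf(H_{n+1}) > Kf(H_{n+2}) > \cdots > Kf(H_m) = Kf(G).$$
Setting $G_1 = H_{n+1}$ then completes the argument.

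This lemma is essentially a direct corollary of the monotonicity in Lemma \ref{NEW1}$(1)$, so there is no genuine analytic obstacle. The single point that requires care, and the only place the hypothesis $m > n+1$ is used, is the bookkeeping ensuring that at \emph{every} intermediate stage the edge count stays strictly above $n-1$, which is exactly what guarantees the existence of a non-bridge edge and hence the admissibility of the next deletion; once the count reaches $n+1$ we simply stop. I would phrase the induction so that this edge-count bound is maintained explicitly, which makes the preservation of connectivity automatic.
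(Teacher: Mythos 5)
Your proposal is correct and follows essentially the same route as the paper's proof: repeatedly delete a non-cut edge and apply Lemma \ref{NEW1}$(1)$ until exactly $n+1$ edges remain. The only difference is that you spell out the bookkeeping (edge count above $n-1$ guarantees a cycle, hence a non-bridge edge) that the paper leaves implicit.
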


\begin{proof} For any connected graph $G$ of order $n$ with $m>n+1$ edges, choosing and deleting one non-cut edge from $G$, we can get a connected graph $G'$ of order $n$
with $m-1$ edges and $Kf(G')>Kf(G)$ by Lemma \ref{NEW1} $(1)$.
Repeating the above process by $m-n-1$ times, we can obtain a
connected graph $G_1$ of order $n$ with $n+1$ edges and
$Kf(G_1)>Kf(G)$, completing the proof of this lemma.
\end{proof}

Now we denote by $Q_n^k$ (see Figure \ref{G3}
for the case when $k=3$) the graph obtained by attaching a pendent
edge to the unique neighbor of the pendent vertex in
$P_{n-1}^k$. Let $R_n^3$ be a graph, shown in Figure
\ref{G3}, which is obtained by attaching a pendent edge to the
vertex with the distance $2$ from the pendent vertex in
$P_{n-1}^3$. A graph $CQ_n^{3}$ is obtained by attaching a pendent edge to a vertex of $C_3$ in $Q_{n-1}^3$ with degree $2$.  Let $C_3(k_1,k_2)$ be a graph obtained attaching a path of length $k_1$ to one vertex of $C_3$ and a path of length $k_2$ to another vertex in $C_3$. Denote by $C_3(k_1,k_2,k_3)$ a graph obtained by attaching three paths of lengths $k_1$, $k_2$ and $k_3$, respectively, to three vertices of $C_3$. In the following we define two sets of graphs:
\begin{eqnarray*}{\cal{H}}(n)=\Big\{P_n^3,Q_n^3,R_n^3,C_3(1,n-4),C_3(2,n-5),CQ_n^{3} \Big\}, \end{eqnarray*}  \begin{eqnarray*}{\cal{T}}^0(n)=\Big\{P_n,T_n(n-3,1^2),T_n(n-4,2,1),T_n(1^2;1^2),T_n(n-5,3,1),T_n(1^2;2,1)\Big\}. \end{eqnarray*}
\begin{figure}[ht!]
\begin{center}
\begin{tikzpicture}[scale=1,style=thick]
\def\vr{3pt}

\draw (-5.0,0.5) -- (-5.0,-0.5) -- (-4.2,0) -- (-5.0,0.5);
\draw (-4.2,0) -- (-3.4,0);
\draw (-1.8,0) -- (-1.0,0) -- (-0.2,-0.5);
\draw (-1.0,0) -- (-0.2,0.5);
\draw (6.8,0) -- (6.0,0) -- (5.2,0) -- (5.2,0.6);
\draw (5.2,0) -- (4.4,0);
\draw (1.5,0.5) -- (1.5,-0.5) -- (2.3,0) -- (1.5,0.5);
\draw (2.3,0) -- (3.0,0);

\draw (-5.0,0.5)  [fill=black] circle (\vr);
\draw (-5.0,-0.5)  [fill=black] circle (\vr);
\draw (-4.2,0)  [fill=black] circle (\vr);
\draw (-3.4,0)  [fill=black] circle (\vr);
\draw (-1.8,0)  [fill=black] circle (\vr);
\draw (-1.0,0)  [fill=black] circle (\vr);
\draw (-0.2,0.5)  [fill=black] circle (\vr);
\draw (-0.2,-0.5)  [fill=black] circle (\vr);

\draw (1.5,0.5)  [fill=black] circle (\vr);
\draw (1.5,-0.5)  [fill=black] circle (\vr);
\draw (2.3,0)  [fill=black] circle (\vr);

\draw (3.0,0)  [fill=black] circle (\vr);
\draw (4.4,0)  [fill=black] circle (\vr);
\draw (5.2,0.6)  [fill=black] circle (\vr);
\draw (5.2,0)  [fill=black] circle (\vr);
\draw (6.0,0)  [fill=black] circle (\vr);

\draw (6.8,0) [fill=black] circle (\vr);

\draw (-2.6,0) node {$\cdots\cdots$};
\draw (3.6,0) node {$\cdots\cdots$};
\draw (-2.8,-0.8) node {$Q_n^3$};
\draw (4.2,-0.8) node {$R_n^3$};

\end{tikzpicture}
\end{center}
\caption{ The graphs $Q_n^3$ and $R_n^3$}
\label{G3}
\end{figure}
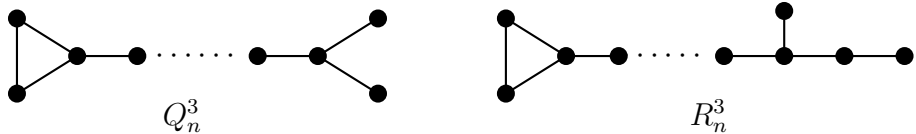

 It is not difficult to verify that any spanning tree of the graphs $C_3(1,1)$ and $C_3(1,1,1)$ must be in the set ${\cal{T}}^0(n)$.
\begin{lemma}\label{LEM3.2} Let $G$ be a connected graph of order $n$ $(n\geq 10)$ with $n$ edges and maximum degree $\Delta\geq 3$,
cycle length $k>4$. Then $G$ has a spanning tree $T$ with $T\notin {\cal{T}}^0(n)$.
\end{lemma}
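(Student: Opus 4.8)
The plan is to exploit that a connected graph of order $n$ with $n$ edges is unicyclic, so its only cycle is $C_k$ and its spanning trees are exactly the graphs $G-e$ with $e$ an edge of $C_k$; deleting $e=v_iv_{i+1}$ lowers only $d(v_i)$ and $d(v_{i+1})$ by one and leaves all other degrees unchanged. I would first record a workable description of membership in $\mathcal{T}^0(n)$: every tree there has maximum degree at most $3$ and at most two vertices of degree $3$; a starlike member $T_n(\alpha,\beta,\gamma)$ (one degree-$3$ vertex, $\alpha\ge\beta\ge\gamma$) lies in $\mathcal{T}^0(n)$ only when $(\beta,\gamma)\in\{(1,1),(2,1),(3,1)\}$; and a two-branch member ($T_n(1^2;1^2)$ or $T_n(1^2;2,1)$) has, off the path joining its two degree-$3$ vertices, only appendages of length $\le 2$, at most one of which has length $2$. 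Hence a spanning tree $T$ fails to lie in $\mathcal{T}^0(n)$ as soon as it satisfies one of: (i) some vertex of degree $\ge 4$; (ii) at least three vertices of degree $3$; (iii) exactly two degree-$3$ vertices whose appendages do not realize the pattern of $T_n(1^2;1^2)$ or $T_n(1^2;2,1)$ (e.g. some appendage of length $\ge 3$, or two appendages of length $2$); (iv) exactly one degree-$3$ vertex with middle branch $\beta\ge 4$ or smallest branch $\gamma\ge 2$. The whole proof consists in producing, for each shape of $G$, a cycle edge whose deletion triggers one of (i)--(iv).

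I would dispose first of $\Delta(G)\ge 4$: a degree-$\ge 4$ vertex off $C_k$, or of degree $\ge 5$, keeps degree $\ge 4$ in every spanning tree, while a degree-$4$ cycle vertex keeps degree $4$ after deleting a cycle edge not incident to it (which exists since $k\ge 5$); either way (i) holds. So assume $\Delta(G)=3$ and let $b$ be the number of degree-$3$ vertices; since $G$ is not a cycle, at least one of them lies on $C_k$. As deleting a cycle edge destroys at most the two degree-$3$ vertices at its ends, any spanning tree retains at least $b-2$ of them, so $b\ge 5$ already forces (ii). If $b=4$ then not all cycle vertices have degree $3$, so some cycle edge meets at most one degree-$3$ vertex, and deleting it leaves $\ge 3$, giving (ii). If $b=3$ and $C_k$ has two consecutive degree-$2$ vertices, I delete the edge between them, retaining all three and obtaining (ii); otherwise each arc cut out by the degree-$3$ cycle vertices has length $\le 2$, so $k\le 2c$ where $c$ is the number of degree-$3$ cycle vertices, and with $k\ge 5$ this forces $c=3$ and $k\in\{5,6\}$ with all three degree-$3$ vertices on $C_k$.

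It remains to treat the cases where the spanning trees genuinely have at most two branch vertices. If $b=1$, $G$ is the tadpole $C_k$ with a single pendant path $P_p$ ($p=n-k\ge 1$) at a cycle vertex $v$; deleting a cycle edge off $v$ keeps $v$ of degree $3$ and yields the starlike tree $T_n(x,y,p)$ with $x+y=k-1\ge 4$ and a free choice of split. When $p\ge 2$ I take $x,y\ge 2$, so $\gamma\ge 2$ and (iv) holds; when $p=1$ then $n=k+1\ge 10$ forces $k\ge 9$, so $x+y\ge 8$ and I take $x,y\ge 4$, making $\beta\ge 4$, again (iv). If $b=2$, the two degree-$3$ vertices are either both on $C_k$, or one on $C_k$ and one off it, say $w$, reached from a cycle vertex $v$ by a path and carrying two further pendant paths. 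In the second configuration I delete a cycle edge off $v$, splitting the opened cycle so that $v$ gains a non-bridge branch of length exactly $3$ (possible since $x+y=k-1\ge 4$), which is (iii). In the first configuration, and in the residual $b=3$, $k\in\{5,6\}$ case, I would instead choose the cycle edge whose removal merges a cycle arc with an adjacent pendant path into a non-bridge branch of length $\ge 3$ (or, when all arcs and pendants are short, into two appendages of length $2$) at a surviving degree-$3$ vertex, again (iii); here $n\ge 10$ supplies the length and, in the residual case, every degree-$3$ cycle vertex has a degree-$2$ neighbour, so the required edge exists.

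The main obstacle is precisely this last group — $b=2$ with both branch vertices on the cycle, together with the residual three-branch configuration on $C_5$ or $C_6$ — because branch-counting cannot manufacture a third degree-$3$ vertex and one must instead exclude the two exceptional trees $T_n(1^2;1^2)$ and $T_n(1^2;2,1)$ by tracking branch lengths. The crux is to verify that under a well-chosen deletion a cycle arc (of length $\ge 2$, available because $k\ge 5$) recombines with a pendant path (of positive length) to form an appendage pattern that those two trees never carry, and to check the few tight sub-cases where every pendant path and arc remnant is short; the hypothesis $n\ge 10$ is exactly what guarantees that, in every distribution of the $n-k$ off-cycle vertices, enough material is present to realize such a pattern.
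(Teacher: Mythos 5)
Your overall strategy (spanning trees of the unicyclic graph $G$ are exactly the graphs $G-e$ with $e$ a cycle edge, plus a case analysis on the number $b$ of degree-$3$ vertices) is sound and considerably more systematic than the paper's own proof, which simply fixes one degree-$3$ cycle vertex and deletes the cycle edge farthest from it. Your cases $\Delta\geq 4$, $b\geq 4$, $b=1$, and the $b=2$ configuration with one branch vertex off the cycle are correct and complete. The gap is exactly where you place it: the case $b=2$ with both degree-$3$ vertices on the cycle (and the residual $b=3$ case) is never actually carried out, and your claim that ``$n\geq 10$ supplies the length'' is false. Take $G$ to be $C_8$ with one pendant edge attached at each of two antipodal vertices $u,v$: then $n=10$, $\Delta=3$, the cycle length is $8>4$, and $b=2$ with both branch vertices on the cycle, two arcs of length $4$, and two pendant paths of length $1$. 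Deleting a cycle edge incident to $u$ or to $v$ yields the starlike tree $T_{10}(5,3,1)=T_n(n-5,3,1)\in{\cal{T}}^0(10)$, while deleting an interior edge of either arc leaves both branch vertices alive with off-path appendages $\{1,i\}$ and $\{1,3-i\}$, $i\in\{1,2\}$, i.e.\ the tree $T_{10}(1^2;2,1)\in{\cal{T}}^0(10)$. Thus all eight spanning trees of this $G$ lie in ${\cal{T}}^0(10)$: the subcase you flag as the crux cannot be closed at $n=10$, because the lemma itself is false there.

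For what it is worth, the paper's own proof breaks on the same example: its one-line justification in Case 2 (``neither of neighbors of $v$ are pendent vertices'') overlooks the possibility that the chosen degree-$3$ cycle vertex has a pendant neighbor off the cycle, which is precisely the situation above, and the deletion it prescribes produces $T_{10}(5,3,1)$. Since the lemma is only invoked for $n>27$, the damage to the paper is limited, and for $n$ in that range your recipe can indeed be completed (once $n\geq 11$ one can no longer have every arc of length at most $4$ and every pendant path of length $1$ simultaneously, so a well-chosen interior deletion creates either an appendage of length at least $3$ or two appendages of length $2$ at one surviving branch vertex, or a starlike tree with $\gamma\geq 2$ or $\beta\geq 4$). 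So the honest fix is to strengthen the hypothesis on $n$ (or exclude the graph above) and then finish your two hard subcases with the explicit bookkeeping you outline. As written, the proposal does not prove the stated lemma, and no argument can.
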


\begin{proof} Assume that $G$ contains a cycle  $C_k$ as a subgraph. According to the value of $\Delta$, we divide into the following two cases.

 \textbf{Case 1.} $\Delta\geq 4$.

In this case, we choose $G-e$  where $e$ is on the cycle in $G$ but not incident with the vertex of degree $\Delta$ in it. Then $G-e$ is a spanning tree of $G$ with maximum degree $\Delta\geq 4$. Then $G-e\notin {\cal{T}}^0(n)$,  since any tree in ${\cal{T}}^0(n)$ has maximum degree $3$.

\textbf{ Case 2.} $\Delta=3$.

Assume that $v$ is a vertex in $C_k$ of degree $3$ in $G$. Note that $k\geq 5$ from the condition in this lemma.
Now we choose an edge $e=v_1v_2$ on the cycle $C_k$ in
$G$ such that $v_1$ and $v_2$ are all in the distance as large as
possible from the vertex $v$. Since $k\geq 5$, we have $d_G(v,v_1)\geq 2$ and
$d_G(v,v_2)\geq 2$. Then $G-e$ is a spanning tree of
$G$ with $G-e\notin {\cal{T}}^0(n)$, since neither of neighbors of $v$
are pendent vertices. \end{proof}
\begin{lemma}\label{LEM3.3} Let $G\notin {\cal{H}}(n)$ be a connected graph of order $n$ $(n\geq 8)$ with $n$ edges and maximum degree $\Delta\geq3$,
cycle length $k=3$. Then $G$ has a spanning tree $T$ with $T\notin {\cal{T}}^0(n)$.
\end{lemma}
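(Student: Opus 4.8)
The plan is to prove the contrapositive: assuming that \emph{every} spanning tree of $G$ belongs to $\mathcal{T}^0(n)$, I will show that $G$ must be one of the six graphs in $\mathcal{H}(n)$. Since $G$ is connected with $n$ vertices and $n$ edges it is unicyclic, and by hypothesis its unique cycle is the triangle $C_3$ on vertices $a,b,c$. Consequently $G$ has exactly three spanning trees, namely $G-ab$, $G-bc$ and $G-ca$, and deleting all three cycle edges splits $G$ into three rooted trees $B_a\ni a$, $B_b\ni b$, $B_c\ni c$.

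First I would record the single structural fact that drives everything: every tree in $\mathcal{T}^0(n)$ has maximum degree at most $3$ and at most two vertices of degree $3$ (equivalently, at most two branching points and at most four leaves), as one reads off directly from the six listed trees. Now the degree of $a$ in $G-ab$, $G-bc$, $G-ca$ equals $\deg_{B_a}(a)+1$, $\deg_{B_a}(a)+2$, $\deg_{B_a}(a)+1$ respectively; requiring all three to be at most $3$ forces $\deg_{B_a}(a)\le 1$, and symmetrically $\deg_{B_b}(b),\deg_{B_c}(c)\le 1$. Thus each triangle vertex carries at most one incident non-cycle edge, so each $B_x$ is either trivial or a tree joined to the triangle by a single edge. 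Let $t\in\{0,1,2,3\}$ denote the number of nontrivial branches.

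Then I would dispose of the extreme cases and set up the two main ones. If $t=0$ then $G=C_3$, excluded since $n\ge 8$ (indeed $\Delta=2$). If $t=3$, each $G-(\text{cycle edge})$ is a star-like tree centred at a triangle vertex; since each of the three star-like targets $T_n(n-3,1^2)$, $T_n(n-4,2,1)$, $T_n(n-5,3,1)$ has a branch consisting of a single vertex, the three spanning trees simultaneously force all three branch-paths to have length $1$, giving $n=6$, a contradiction (and any internal branching inside a $B_x$ is excluded, since then in one spanning tree the vertex $a$ would be a branching point whose two legs toward $b$ and $c$ both have length $\ge 2$, which cannot match the $1^2$ end demanded by $T_n(1^2;1^2)$ or $T_n(1^2;2,1)$). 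This leaves $t=1$ and $t=2$. For $t=1$, with single branch $B_a$, two spanning trees attach a path of length $2$ at $a$ and the third attaches two pendant vertices at $a$; the two-branching-point bound forces $B_a$ to have at most one branching point, and then $B_a$ a path yields $G=P_n^3$, while $B_a$ a spider with one branching point is pinned down, by matching against $T_n(1^2;1^2)$ and $T_n(1^2;2,1)$, to give exactly $Q_n^3$ and $R_n^3$. For $t=2$, with branches at $p,q$ and free vertex $z$, if both branches are paths then the two star-like spanning trees centred at $p$ and $q$ must each equal one of the three star-like targets, and solving the resulting length equations yields $C_3(1,n-4)$ and $C_3(2,n-5)$; if one branch carries an internal branching point the same bookkeeping isolates $CQ_n^3$. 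Collecting these outcomes gives $G\in\mathcal{H}(n)$.

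The main obstacle will be the exhaustive matching in the cases $t=1$ and $t=2$: one must verify that requiring two different spanning trees of the \emph{same} $G$ to lie in $\mathcal{T}^0(n)$ simultaneously pins the branch lengths down to exactly the listed values and admits no further graph. This bookkeeping is genuinely delicate near the lower end of the range; for instance at $n=9$ the two length equations in the two-path subcase of $t=2$ acquire the extra symmetric solution $C_3(3,3)$, whose three spanning trees are $P_9$, $T_9(4,3,1)$ and $T_9(4,3,1)$, all lying in $\mathcal{T}^0(9)$. Hence the argument, and the stated bound $n\ge 8$, must be treated with care for the smallest orders, whereas for the larger $n$ relevant to the paper's conclusions the six families in $\mathcal{H}(n)$ exhaust all possibilities.
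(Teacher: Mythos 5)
Your contrapositive classification is correct in outline and takes a genuinely different route from the paper: the paper argues directly, splitting into cases by how many triangle vertices have degree $3$, choosing one particular cycle edge $e$ (guided by eccentricity) and ruling out each member of ${\cal{T}}^0(n)$ for $G-e$ by reconstructing what $G$ would then have to be; you instead assume all three spanning trees lie in ${\cal{T}}^0(n)$ and solve for $G$, which forces you to enumerate \emph{every} reconstruction rather than the single one the paper has in mind. That extra care pays off: your counterexample $C_3(3,3)$ at $n=9$ is genuine. Its three spanning trees are $P_9$, $T_9(4,3,1)$ and $T_9(4,3,1)$, and $T_9(4,3,1)=T_9(n-5,3,1)\in{\cal{T}}^0(9)$ while $C_3(3,3)\notin{\cal{H}}(9)$, so the lemma as stated for all $n\geq 8$ is false; the flaw in the paper's Case 2 is precisely the step ``$G-e\ncong T_n(n-5,3,1)$ since $G\ncong C_3(2,n-5)$,'' which overlooks that reinserting $e$ into $T_n(n-5,3,1)$ can also yield $C_3(3,n-6)$, and this escapes the eccentricity-based edge choice exactly when $n-6=3$. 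For $n\geq 10$ your length equations admit only the listed solutions, so the lemma (and everything downstream, which only needs $n>27$) survives under the hypothesis $n\neq 9$. Two points you should still write out in full: in the $t=2$ case, state explicitly that at most one branch may contain a branching point (otherwise some spanning tree has three branching points, impossible in ${\cal{T}}^0(n)$) and that the two-branching-point targets $T_n(1^2;1^2)$, $T_n(1^2;2,1)$ force the all-path branch to be a single pendant edge, which is what pins down $CQ_n^3$; and in the $t=1$ case, carry out the matching against $T_n(1^2;1^2)$ and $T_n(1^2;2,1)$ that yields $Q_n^3$ and $R_n^3$ rather than asserting it.
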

\begin{proof} For the case $\Delta>3$, from a similar reasoning as that in Case 1 in the proof of Lemma \ref{LEM3.2}, our result follows immediately. Therefore  it suffices to consider the case $\Delta=3$. Assume that $C_3=v_1v_2v_3v_1$ in $G$. Next we deal with the following three cases.

\textbf{Case 1.} There is only one vertex, say $v_1$, of $C_3$ in $G$ with degree $3$.

In this case,  we choose the  edge $e=v_2v_3$ in $C_3$. Then $G-e$ is a spanning tree of $G$, in which the vertex $v_1$ is still of degree $3$. Thus we have $G-e\ncong P_n$. If $G-e\cong T_n(n-3,1^2)$, then the super graph $G$ obtained by inserting the edge $e$ into $T_n(n-3,1^2)$ is just $P_n^3$, contradicting the fact that $G\notin {\cal{H}}(n)$. Therefore $G-e\ncong T_n(n-3,1^2)$. By a similar reasoning, we can conclude that $G-e\ncong T_n(1^2;1^2)$ for the edge $e\in E(G)$ defined as above from the condition that $G\ncong Q_n^3$. Moreover, if $G-e\cong T_n(1^2;2,1)$ for the edge $e$ in the triangle in $G$ and not incident with the vertex $v$ in it, then we claim that $G\cong R_n^3$.  This is impossible because of the fact that $G\notin {\cal{H}}(n)$. Therefore, we have $G-e\notin {\cal{T}}^0(n)$.

\textbf{Case 2. } There are exactly two vertices, say $v_1$ and $v_2$,  of $C_3$ in $G$ with degree $3$.

In this case, without loss of generality, we assume that the eccentricity of $v_1$ is not more than that of $v_2$ in $G$. Let $e=v_2v_3$. Then $G-e$ is a spanning tree in $G$. Since $G\ncong C_3(1,n-4)$, we deduce that $G-e\ncong T_n(n-4,2,1)$. Similarly, we have $G-e\ncong T_n(n-5,3,1)$ from the condition $G\ncong C_3(2,n-5)$. Moreover, $G-e\ncong T_n(1^2;2,1)$, since $G\ncong CQ_n^3$.  Note that, in $G-e$, there are at least two pendent vertices at the distance $d\geq 2$ to $v_1$ with degree $3$. Therefore we have $G-e\notin  {\cal{T}}^0(n)$ as desired.

\textbf{Case 3.} All the vertices of $C_3$ in $G$ are of degree $3$.

 Assume that $v_1$ has the smallest eccentricity among all the vertices of $C_3$ in $G$. Let $e=v_2v_3$. Then $G-e$ is a spanning tree of $G$ such that $v_1$ is of degree $3$ in it. Moreover, $G-e\notin  {\cal{T}}^0(n)$, since there are at least three pendent vertices at the distance at least $2$ to $v_1$ in $G-e$. This completes the proof for this case, ending the proof of this lemma.
\end{proof}
\begin{theorem}\label{TH1} Let $n>27$. Then we have
\begin{eqnarray*}Kf(P_n)>Kf(T_n(n-3,1^2))>Kf(P_n^3)>Kf(T_n(n-4,2,1))>Kf(T_n(1^2;1^2))>Kf(Q_n^3)\\
>Kf(T_n(n-5,3,1))>Kf(T_n(n-4,1^3))=Kf(T_n(1^2;2,1))>Kf(C_{3,3}^{n-5}).\end{eqnarray*}\end{theorem}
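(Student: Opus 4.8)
The plan is to prove the nine displayed inequalities by computing all ten Kirchhoff indices in closed form, and then to justify that these really are the extremal graphs by stratifying $\mathcal{G}(n)$ according to the number of edges. Since adding an edge to a connected graph strictly lowers $Kf$ (Corollary \ref{CO0}) and $P_n$ is already the unique maximizer over $\mathcal{G}(n)$, the relevant strata are the trees ($n-1$ edges), the unicyclic graphs ($n$ edges), the bicyclic graphs ($n+1$ edges), and the graphs with at least $n+2$ edges. Lemma \ref{LEM3.1} pushes the last stratum strictly below the bicyclic maximum, and Lemma \ref{LEM7} identifies that maximum as $C_{3,3}^{n-5}$, which anchors the bottom of the chain.

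For the ten indices I would use $Kf(T)=W(T)$ for trees (Lemma \ref{NEW4}) together with the cut-edge expansion, so that each starlike index is $\sum_j\sum_{s=1}^{n_j}s(n-s)$ and the two-branch-point trees split analogously; their mutual order is already recorded in Corollary \ref{CO1}. For the cyclic graphs I would apply the cut-vertex formula (Lemma \ref{LEM8}) with $Kf(C_3)=2$ and $Kf_x(C_3)=\tfrac43$, recovering $Kf(P_n^3)=\frac{n^3-11n+18}{6}$ and $Kf(C_{3,3}^{n-5})=\frac{n^3-21n+36}{6}$ from Lemmas \ref{LEM6} and \ref{LEM7}, and computing $Kf(Q_n^3)=\frac{n^3-17n+36}{6}$. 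Each of the nine comparisons then reduces to a sign test on a linear polynomial; the hypothesis $n>27$ enters exactly through $Kf(Q_n^3)>Kf(T_n(n-5,3,1))$ and through the gap inequality $Kf(T_n(n-6,4,1))<Kf(C_{3,3}^{n-5})$, both of which collapse to $n>27$, while the equality $Kf(T_n(n-4,1^3))=Kf(T_n(1^2;2,1))$ comes straight from Corollary \ref{CO1}.

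Completeness I would handle stratum by stratum. Corollary \ref{CO1} places every unlisted tree at or below $T_n(n-6,4,1)$, hence below $C_{3,3}^{n-5}$ once $n>27$, and Lemma \ref{LEM7} handles the bicyclic stratum with equality only at $C_{3,3}^{n-5}$. For a unicyclic $G$ I would split on the cycle length: if it is at least $4$, Lemma \ref{LEA9} gives $Kf(G)\le Kf(P_n^k)$, and $Kf(P_n^k)$ is decreasing in $k$ with $Kf(P_n^4)=\frac{n^3-22n+54}{6}<Kf(C_{3,3}^{n-5})$; if the cycle is a triangle with $\Delta\ge 3$, then Lemmas \ref{LEM3.2} and \ref{LEM3.3} produce a spanning tree outside $\mathcal{T}^0(n)$, so that Corollary \ref{CO0} forces $Kf(G)$ below $Kf(C_{3,3}^{n-5})$ whenever that spanning tree can be taken different from $T_n(n-4,1^3)$. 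The residual triangle-unicyclic graphs are exactly those gathered into $\mathcal{H}(n)$, together with $C_n$, whose index $\frac{n^3-n}{12}$ is immediately small enough.

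The main obstacle is precisely this residual case. The spanning-tree bound is useless for a graph all of whose low-index spanning trees are $T_n(n-4,1^3)$, so for each member of $\mathcal{H}(n)$ other than $P_n^3$ and $Q_n^3$ — namely $R_n^3$, $C_3(1,n-4)$, $C_3(2,n-5)$ and $CQ_n^3$ — one must compute $Kf$ directly via Lemma \ref{LEM8} and compare with $\frac{n^3-21n+36}{6}$. These are the tightest comparisons in the whole argument, differing from $Kf(C_{3,3}^{n-5})$ only in their linear and constant coefficients, and $C_3(1,n-4)$ is the nearest competitor from this family. I would carry out these four cut-vertex computations first, since they are what actually decide whether $C_{3,3}^{n-5}$ occupies the ninth place and whether $n>27$ is the correct threshold for the entire ordering.
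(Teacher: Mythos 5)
Your computational core for the displayed chain is correct and is essentially the paper's own proof: the paper likewise expresses each relevant index as $(n^3+an+b)/6$ (getting $Kf(P_n^3)$ and $Kf(C_{3,3}^{n-5})$ from Lemmas \ref{LEM6} and \ref{LEM7}, $Kf(Q_n^3)=\frac{n^3-17n+36}{6}$ from the cut-vertex formula of Lemma \ref{LEM8}, and the tree values from $Kf=W$), reduces every comparison to a linear sign test, and finds that the only binding constraint is $Kf(Q_n^3)>Kf(T_n(n-5,3,1))$, i.e.\ $2n>54$; it merely economizes by invoking Corollary \ref{CO1} for the tree--tree comparisons and Lemma \ref{NEW1}(2) for $Kf(T_n(n-3,1^2))>Kf(P_n^3)$ and $Kf(T_n(1^2;1^2))>Kf(Q_n^3)$ rather than computing those tree values explicitly. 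Note, however, that the bulk of your proposal --- the stratification by edge count, Lemmas \ref{LEM3.1}--\ref{LEM3.3}, and the residual family ${\cal{H}}(n)$ --- proves the extremality assertion of Theorem \ref{TH2}, which is not part of this statement; and in that discussion the nearest competitor to $C_{3,3}^{n-5}$ among $\{R_n^3,C_3(1,n-4),C_3(2,n-5),CQ_n^3\}$ is actually $R_n^3$, with $Kf(R_n^3)=\frac{n^3-23n+66}{6}$, not $C_3(1,n-4)$.
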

\begin{proof}In view of Corollary \ref{CO1} and Lemma \ref{LEM6}, considering Lemma \ref{NEW1} $(2)$, we claim that the remaining is only to prove the following inequalities:
 \begin{equation}Kf(P_n^3)>Kf(T_n(n-4,2,1)),~\label{e3}  \end{equation}
 \begin{equation}Kf(Q_n^3)>Kf(T_n(n-5,3,1)),~\label{e4}\end{equation}
 \begin{equation}Kf(T_n(1^2;2,1))>Kf(C_{3,3}^{n-5}).~\label{e5} \end{equation}
 From Lemma \ref{NEW4} and the results in \cite{LLL2010}, we have
 \begin{eqnarray}Kf(T_n(n-4,2,1))=\displaystyle{{n+1\choose 3}}-2n+8=\displaystyle{\frac{n^3-13n+48}{6}},\nonumber\end{eqnarray} \begin{eqnarray}Kf(T_n(n-5,3,1))=\displaystyle{{n+1\choose 3}}-3n+15=\displaystyle{\frac{n^3-19n+90}{6}},\nonumber\end{eqnarray} \begin{eqnarray}Kf(T_n(1^2;2,1))=\displaystyle{{n+1\choose 3}}-3n+11=\displaystyle{\frac{n^3-19n+66}{6}}.\nonumber\end{eqnarray}

 By Lemmas \ref{LEM6} and \ref{LEM7}, we arrive at the following results: $$Kf(P_n^3)=\displaystyle{\frac{n^3-11n+18}{6}}, ~~~ Kf(C_{3,3}^{n-5})=\displaystyle{\frac{n^3-21n+36}{6}}.$$

 Some straightforward calculations show the validity of inequalities (\ref{e3}) and (\ref{e5}) for $n>27$.

Setting $T'=T_{n-2}(n-5,1^2)$ and applying Lemma \ref{LEM8} to the vertex of degree $3$ in $C_3$ of $Q_n^3$, we have
\begin{eqnarray*}
Kf(Q_n^3)&=&Kf(C_3)+Kf(T')+2Kf_x(T')+(n-2)Kf_x(C_3)\\[3mm]
&=&2+\frac{(n-2)^3-7(n-2)+18}{6}\\[3mm]
&&+2\Big[1+2+3+\cdots\cdots+(n-5)+2(n-4)\Big]+\frac{4}{3}(n-3)\\[3mm]
&=& \frac{n^3-17n+36}{6}.
\end{eqnarray*}
It can be easily checked that
$\displaystyle{\frac{n^3-17n+36}{6}}>\frac{n^3-19n+90}{6}$ when
$n>27$, i.e., the inequality (\ref{e4}) holds if $n>27$. This
completes the proof of this theorem.
\end{proof}

Now we define a new set of graphs as follows:
\begin{eqnarray*}{\cal{G}}^0(n)={\cal{T}}^0(n)\bigcup\left\{T_n(n-4,1^3),P_n^3,Q_n^3,C_{3,3}^{n-5}\right\}.
\end{eqnarray*}

In the following theorem we order the graphs from ${\cal{G}}(n)$ with first to tenth largest Kirchhoff indices.
\begin{theorem}\label{TH2} Let $G$ be any graph from ${\cal{G}}(n)\setminus{\cal{G}}^0(n)$ with $n>27$. Then we have
\begin{eqnarray*}Kf(P_n)>Kf(T_n(n-3,1^2))>Kf(P_n^3)>Kf(T_n(n-4,2,1))>Kf(T_n(1^2;1^2))
>Kf(Q_n^3)\\>Kf(T_n(n-5,3,1))>Kf(T_n(n-4,1^3))=Kf(T_n(1^2;2,1))
>Kf(C_{3,3}^{n-5})>Kf(G).\end{eqnarray*}\end{theorem}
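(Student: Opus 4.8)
The displayed chain of strict inequalities among the ten graphs is precisely the assertion of Theorem \ref{TH1}, so the only new statement to be proved is that $Kf(G)<Kf(C_{3,3}^{n-5})=\frac{n^3-21n+36}{6}$ for every $G\in{\cal{G}}(n)\setminus{\cal{G}}^0(n)$ with $n>27$. The plan is to split the argument according to $m=|E(G)|$ and bound $Kf(G)$ from above in each regime. The cases $m\geq n+1$ are immediate: if $m\geq n+2$, Lemma \ref{LEM3.1} supplies a connected graph $G_1$ on $n+1$ edges with $Kf(G_1)>Kf(G)$ and Lemma \ref{LEM7} gives $Kf(G_1)\leq Kf(C_{3,3}^{n-5})$; if $m=n+1$, Lemma \ref{LEM7} applies directly and the hypothesis $G\neq C_{3,3}^{n-5}$ makes the inequality strict. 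If $m=n-1$, so that $G$ is a tree, then because ${\cal{G}}^0(n)$ already contains the seven trees of largest Kirchhoff index (the six trees of ${\cal{T}}^0(n)$ together with $T_n(n-4,1^3)$), Corollary \ref{CO1} forces $Kf(G)\leq Kf(T_n(n-6,4,1))$; using $Kf=W$ on trees (Lemma \ref{NEW4}) one computes $Kf(T_n(n-6,4,1))=\frac{n^3-25n+144}{6}$, and $\frac{n^3-25n+144}{6}<\frac{n^3-21n+36}{6}$ exactly when $n>27$.

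The substantial case is $m=n$, i.e. $G$ unicyclic. Corollary \ref{CO0} gives $Kf(G)<Kf(T)$ for every spanning tree $T$ of $G$, so it is enough to find a spanning tree lying outside the seven largest trees, that is, with $Kf(T)\leq Kf(T_n(n-6,4,1))$; the numerical inequality $Kf(T_n(n-6,4,1))<Kf(C_{3,3}^{n-5})$ for $n>27$ then closes the case. When $\Delta(G)=3$ every spanning tree has maximum degree at most $3$ and is therefore distinct from $T_n(n-4,1^3)$, the only one of the seven top trees with a vertex of degree $4$; hence a spanning tree outside ${\cal{T}}^0(n)$ is automatically outside all seven, and such a tree is produced by Lemma \ref{LEM3.2} when the cycle length is $k>4$ and by Lemma \ref{LEM3.3} when $k=3$ and $G\notin{\cal{H}}(n)$. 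The remaining graphs of this kind, namely $C_n$ (where $\Delta=2$) and the unicyclic graphs of cycle length $k\geq5$ with $\Delta\geq4$, are handled the same way, in the latter case by deleting a cycle edge so as to leave a branch of length at least two, which again yields a tree outside the seven largest.

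What is left, and what I expect to be the crux of the proof, is the finite collection of unicyclic graphs all of whose spanning trees fall among the seven largest trees: these are exactly the short-cycle graphs $(k\in\{3,4\})$ carrying one long pendant path and only a bounded amount of extra structure. They include the four members of ${\cal{H}}(n)\setminus{\cal{G}}^0(n)$, namely $R_n^3$, $C_3(1,n-4)$, $C_3(2,n-5)$ and $CQ_n^3$, the graph $P_n^4$, and the short-cycle graphs with $\Delta\geq4$ (for instance the graph built from $T_n(n-4,1^3)$ by joining two pendant neighbours of its degree-$4$ vertex). For each of these the spanning-tree estimate is too weak, so I would instead compute $Kf(G)$ directly by applying Lemma \ref{LEM8} at the cut vertex joining the cyclic part to the attached path, exactly as $Kf(Q_n^3)$ was evaluated in Theorem \ref{TH1}. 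Each such computation produces a cubic polynomial in $n$ over $6$ — for example $\frac{n^3-22n+54}{6}$ for $P_n^4$ and $\frac{n^3-23n+66}{6}$ for the degree-$4$ example — and in every case this value is checked to be smaller than $\frac{n^3-21n+36}{6}$ for $n>27$. Verifying that this exceptional list is complete, and carrying out the several direct evaluations, is the delicate part; the edge-count cases and the spanning-tree reductions above are routine given the lemmas already established.
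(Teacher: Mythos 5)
Your overall strategy is the paper's: reduce to showing $Kf(G)<Kf(C_{3,3}^{n-5})$ via Theorem \ref{TH1}, split on the edge count $m$, and handle $m\geq n+1$ and $m=n-1$ exactly as the paper does. Your treatment of unicyclic graphs is also the paper's in spirit (spanning-tree reduction via Corollary \ref{CO0} together with Lemmas \ref{LEM3.2} and \ref{LEM3.3}, then direct evaluation via Lemma \ref{LEM8} for a short list of exceptions), and in one respect you are more careful than the paper: you notice that a spanning tree outside ${\cal{T}}^0(n)$ could still be $T_n(n-4,1^3)$, which is among the seven largest trees, and you correctly isolate the triangle with a degree-$4$ vertex (all of whose spanning trees are $T_n(n-4,1^3)$ or $T_n(n-4,2,1)$) as requiring the direct computation $\frac{n^3-23n+66}{6}$.

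Two concrete gaps remain. First, $C_n$ cannot be ``handled the same way'': its unique spanning tree is $P_n$, the largest tree of all, so the spanning-tree reduction gives nothing there; one must instead use the closed form $Kf(C_n)=\frac{n^3-n}{12}$ (as the paper does), which is below $\frac{n^3-21n+36}{6}$ for $n>27$. Second, and more seriously, your exceptional list for cycle length $4$ is incomplete and you offer no mechanism to certify completeness. Take $C_4=v_1v_2v_3v_4v_1$ with a pendant vertex attached at $v_1$ and a path on $n-5$ vertices attached at $v_3$: its four spanning trees are $T_n(n-5,3,1)$ (twice) and $T_n(n-3,1^2)$ (twice), all among the seven largest, yet this graph is neither $P_n^4$ nor of maximum degree at least $4$ nor in ${\cal{H}}(n)$, so it escapes both your spanning-tree argument and your enumerated exceptions. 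The paper closes exactly this hole with Lemma \ref{LEA9}: every connected graph of order $n$ with $n$ edges and cycle length $4$ satisfies $Kf(G)\leq Kf(P_n^4)=\frac{n^3-22n+54}{6}<Kf(C_{3,3}^{n-5})$, with no spanning-tree case analysis needed. You should import that lemma, and for cycle length $3$ with $\Delta\geq 4$ either verify your enumeration of exceptions or strengthen the argument of Lemma \ref{LEM3.3} so that it produces a spanning tree outside all seven top trees rather than merely outside ${\cal{T}}^0(n)$.
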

\begin{proof} By Theorem \ref{TH1}, it suffices to prove that $Kf(G)<Kf(C_{3,3}^{n-5})$ for any graph $G\in {\cal{G}}(n)\setminus{\cal{G}}^0(n)$ with $n>27$.

 If $G\in {\cal{G}}(n)\setminus{\cal{G}}^0(n)$ has $m>n+1$ edges,  by Lemma \ref{LEM3.1}, we conclude that there exists a connected graph $G_1$ of order $n$ and with $n+1$ edges
 such that $Kf(G)<Kf(G_1)$. By Lemma \ref{LEM7}, we have $Kf(G)<Kf(G_1)\leq Kf(C_{3,3}^{n-5})$. Clearly, for any connected graph $G$ of order $n$ and with
 $n+1$ edges,  $Kf(G)<Kf(C_{3,3}^{n-5})$ from Lemma \ref{LEM7}, again.

 Now we only need to consider the connected graphs of order $n$ and with $m\leq n$ edges. In the case when $m=n-1$ with $n>27$, for any graph $G\notin {\cal{T}}^0(n)\bigcup\{T_n(n-4,1^3)\}$
 of order $n$ and with $n-1$ edges, i.e., $G$ is a tree, by Corollary \ref{CO1} and Lemma \ref{LEM8}, we have
\begin{eqnarray*}
 Kf(G)&\leq&Kf(T_n(n-6,4,1))\\[3mm]
 &=&{n+1\choose 3}-4n+24\\[3mm]
 &=&\frac{n^{3}-25n+144}{6}\\[3mm]
 &<&\frac{n^{3}-21n+36}{6}\\[3mm]
 &=&Kf(C_{3,3}^{n-5}).
\end{eqnarray*}

Now we focus on the case when $m=n$. Combining Lemma
\ref{LEM3.2} and Corollaries \ref{CO0} and \ref{CO1}, we find that,
when $n>27$,  for any connected graph $G$ of
order $n$ and with $n$ edges, maximum degree $\Delta\geq 3$ and
cycle length $k>4$, we have $Kf(G)\leq
Kf(T_n(n-6,4,1))<Kf(C_{3,3}^{n-5})$. By Lemma \ref{LEA9}, we have
$Kf(G)\leq Kf(P_n^4)$ for any connected graph $G$ of order $n$
and with $n$ edges and cycle length $4$. From Lemma \ref{LEM3.3}, Corollaries \ref{CO0} and \ref{CO1}, we have $Kf(G)\leq
Kf(T_n(n-6,4,1))<Kf(C_{3,3}^{n-5})$ for any graph $G\notin {\cal{H}}(n)$ of order $n$ with $n$ edges, cycle length $3$ and maximum degree $\Delta$. Thus the remaining for this case is to show that $Kf(G)<Kf(C_{3,3}^{n-5})$ for any graph $G$ from the set $\{R_n^3,P_n^4,C_n,C_3(1,n-4),C_3(2,n-5),CQ_n^3\}$. From Corollary \ref{CAD1}, $Kf(CQ_n^3)<Kf(C_3(1,n-4))$. Note that $Kf(P_n^3)=\displaystyle{\frac{n^3-11n+18}{6}}$ and $Kf(P_n)=\displaystyle{\frac{n^3-n}{6}}$ (\cite{LLL2010}). Applying Lemma \ref{LEM8} to the vertices in $C_3$ of $C_3(1,n-4),C_3(2,n-5)$, respectively, with degree $3$ and a smaller eccentricity, we have $$Kf(C_3(1,n-4))=\frac{n^3-27n+82}{6},~~~Kf(C_3(2,n-5))=\frac{n^3-25n+88}{6}, $$
 both of them is less than $\displaystyle{\frac{n^{3}-21n+36}{6}}=Kf(C_{3,3}^{n-5})$. Moreover, we have $Kf(CQ_n^3)<Kf(C_{3,3}^{n-5})$. By the formula
 $$Kf(P_n^l)=\displaystyle{\frac{n^{3}-2n}{6}}+\frac{(1+2n)l}{4}+\frac{l^{3}}{4}-\frac{(3+2n)l^{2}}{6}$$
 in \cite{Yang2008}, we can get
   $$Kf(P_n^4)=\displaystyle{\frac{n^{3}-22n+54}{6}}<\frac{n^{3}-21n+36}{6}=Kf(C_{3,3}^{n-5})~~\mbox{ when }~n>27.$$
 Also from \cite{Yang2008}, we have $Kf(C_n)=\displaystyle{\frac{n^3-n}{12}}$.
 Therefore it follows that
\begin{eqnarray*}
Kf(C_n)&=&\frac{n^3-n}{12}\\[3mm]
&<&\frac{n^{3}-21n+36}{6}\\[3mm]
 &=&Kf(C_{3,3}^{n-5})~~as~~ n^3-41n+72>0~~\mbox{ when }~~ n>27.
\end{eqnarray*}
 Finally, setting $T''=T_{n-2}(n-6,2,1)$, by the application of Lemma \ref{LEM8} to the vertex, say $x$, of degree $3$ on the triangle $C_3$ of $R_n^3$, we have
\begin{eqnarray*}Kf(R_n^3)&=&Kf(C_3)+Kf(T'')+(n-3)Kf_x(C_3)+2Kf_x(T'')\\[3mm]
&=&2+\frac{(n-2)^2-13(n-2)+48}{6}+\frac{4}{3}(n-3)\\[3mm]
&&+2\Big[1+2+\cdots+(n-5)+(n-4)+(n-5)\Big]\\[3mm]
&=&\frac{n^3-23n+66}{6}.
\end{eqnarray*}
Obviously, we conclude that
   $$Kf(R_n^3)=\displaystyle{\frac{n^3-23n+66}{6}}<\frac{n^{3}-21n+36}{6}=Kf(C_{3,3}^{n-5})~~\mbox{ if }~n>27.$$
Thus we complete the proof of this theorem.
\end{proof}


\begin{thebibliography}{99}

\bibitem{BKlein02}
 D. Babic, D. J. Klein, I. Lukovits, S. Nikoli\'{c}, N. Trinajsti\'{c},
Resistance distance matrix: a computational algorithm and its
application, Int. J. Quantum Chem. \textbf{90} (2002) 166--176.

\bibitem{Bapat}
 R. B. Bapat, I. Gutman, W.J. Xiao, A simple method for
computing resistance distance, Z. Naturforsch. \textbf{58}a (2003) 494--498.

\bibitem{BW} P. Biler, A. Witkowski, Problems in Mathematical Analysis, New York, 1990.

\bibitem{Bonchev94}
D. Bonchev, A. T. Balaban, X. Liu, D. J. Klein, Molecular cyclicity
and centricity of polycyclic graphs: I. Cyclicity based on
resistance distances or reciprocal distances, Int. J. Quantum Chem.
\textbf{50} (1994) 1--20.

\bibitem{BM1976} J. A. Bondy, U. S. R. Murty, \textit{Graph Theory with Applications}, American Elsevier Publishing
Co., Inc., New York, 1976.

\bibitem{CMS1995} D. Cvetkovi\'{c}, M. Doob, and H. Sachs, \textit{Spectra of Graphs-Theory and Application}, third edition,
Johann Ambrosius Barth, Heidelberg, 1995.

\bibitem{DA0} K. C. Das, An improved upper bound for Laplacian graph eigenvalues, Linear Algebra Appl. \textbf{368} (2003) 269--278.

\bibitem{DA1} K. C. Das, A. D. G\"ung\"or, A. Sinan \c{C}evik, On the Kirchhoff index and the resistance-distance energy of a
     graph, MATCH Commun. Math. Comput. Chem. \textbf{67} (2012) 541--556.

\bibitem{DA2} K. C. Das, On Kirchhoff index of graphs, Z. Naturforsch. \textbf{68}a (2013) 531--538.

\bibitem{DXG2012} K. C. Das, K. Xu, and I. Gutman. Comparison between Kirchhoff index and the Laplacian-energy-like invariant. Linear Algebra Appl. \textbf{436} (2012) 3661--3671.

\bibitem{DChen2013}Q. Deng, H. Chen, On the Kirchhoff index of the complement
of a bipartite graph,  Linear Algebra Appl. \textbf{439} (2013) 167--173.
\bibitem{DChen2014}Q. Deng, H. Chen, On extremal bipartite unicyclic graphs,  Linear Algebra Appl. \textbf{444} (2014) 89--99.
\bibitem{FYXJ2014}L. Feng, G. Yu, K. Xu, Z. Jiang, A note on the Kirchhoff index of bicyclic graphs, Ars Combinatorica \textbf{114} (2014) 33--40.

\bibitem{Gut1996}I. Gutman, B. Mohar,
The quasi-Wiener and the Kirchhoff indices coincide,
 J. Chem. Inf. Comput. Sci. \textbf{36 }(1996) 982--985.
\bibitem{GMS1990}
R. Grone, R. Merris, V.S. Sunder, The Laplacian spectrum of a graph, SIAM J. Matrix Anal. Appl. \textbf{11}  (1990)
218-238.

\bibitem{Kel1974}
A.K. Kelmans, A certain polynomial of a
graph and graphs with an extremal number of trees, J. Combin. Theory B  \textbf{16} (1974) 197--214.

\bibitem{Klein93}
D.J. Klein, M. Randi\'c, Resistance distance, J. Math. Chem. \textbf{12}
(1993) 81--95.

\bibitem{Klein97}
 D.J. Klein, Graph geometry, graph metrics, and Wiener, MATCH
Commun. Math. Comput. Chem. \textbf{35} (1997) 7-27.

\bibitem{LLL2010}M. Liu, B. Liu, Q. Li, Erratum to `The trees on $n>9$ vertices with the first to seventeenth greatest Wiener indices are chemical trees', MATCH Commun. Math.
Comput. Chem. \textbf{64} (2010) 743--756.

\bibitem{Lukovits99}
 I. Lukovits, S. Nikoli\'{c}, N. Trinajsti\'{c}, Resistance distance in
regular graphs, Int. J. Quantum Chem. \textbf{71} (1999) 217--225.

\bibitem{Merris1994} R. Merris, Laplacian matrices of graphs: A survey, Linear Algebra Appl. \textbf{197/198} (1994) 143-176.


\bibitem{Wiener}
H. Wiener, Structural determination of paraffin boiling points, J.
Amer. Chem. Soc. \textbf{69} (1947) 17--20.

\bibitem{XLDGF2014}K. Xu, M. Liu, K. C. Das, I. Gutman, B. Furtula, A survey on graphs extremal with respect to
distance-based topological indices, MATCH Commun. Math. Comput. Chem. \textbf{71} (2014) 461--508.

\bibitem{Yang2008}
 Y.J. Yang, X.Y. Jiang, Unicyclic graphs with extremal
Kirchhoff index, MATCH Commun. Math. Comput. Chem. \textbf{60} (2008)
107--120.

\bibitem{ZhangYang09}
 H.P. Zhang, X.Y. Jiang, Y.J. Yang, Bicyclic Graphs with Extremal
Kirchhoff Index, MATCH Commun. Math. Comput. Chem. \textbf{61} (2009)
697--712.

\end{thebibliography}
\end{document}